\newcommand{\Z}{\mathbb{Z}}
\renewcommand{\b}{\mathbb{B}}
\newcommand{\W}{\mathbb{W}}
\renewcommand{\H}{\mathbf{H}}
\newcommand{\h}[1]{\mbox{$\mathbf{H}$\boldmath $#1$}}
\newcommand{\G}{\mathcal{G}}
\newcommand{\bn}{\ensuremath{\beta}}
\newcommand{\e}{\ensuremath{\varepsilon}}
\theoremstyle{plain}
\newtheorem{thm}{Theorem}[section]
\newtheorem{cor}[thm]{Corollary}
\newtheorem{lem}[thm]{Lemma}
\newtheorem{prop}[thm]{Proposition}
\theoremstyle{definition}
\newtheorem{defn}[thm]{Definition}
\newtheorem*{induct}{Inductive Hypothesis}
\newtheorem{question}{Question}
\newtheorem{example}[thm]{Example}
\theoremstyle{remark}
\newtheorem{rem}[thm]{Remark}
\title{Homotopy Type of the Boolean Complex of a Coxeter System}
\author{K\'{a}ri Ragnarsson}
\address{The Mathematical Sciences Research Institute, Berkeley, California}
\email{karirag@msri.org}
\author{Bridget Eileen Tenner}
\address{Department of Mathematical Sciences, DePaul University, Chicago, Illinois}
\email{bridget@math.depaul.edu}
\subjclass[2000]{Primary 20F55; Secondary 05E15, 06A07, 55P15}
\begin{document}

\begin{abstract}
In any Coxeter group, the set of elements whose principal order ideals are boolean forms a simplicial poset under the Bruhat order.  This simplicial poset defines a cell complex, called the boolean complex.  In this paper it is shown that, for any Coxeter system of rank $n$, the boolean complex is homotopy equivalent to a wedge of $(n-1)$-dimensional spheres. The number of such spheres can be computed recursively from the unlabeled Coxeter graph, and defines a new graph invariant called the boolean number.  Specific calculations of the boolean number are given for all finite and affine irreducible Coxeter systems, as well as for systems with graphs that are disconnected, complete, or stars.  One implication of these results is that the boolean complex is contractible if and only if a generator of the Coxeter system is in the center of the group.\\

\noindent \emph{Keywords.}  Coxeter system; Bruhat order; boolean; boolean number; homotopy; cell complex; discrete Morse theory
\end{abstract}

\maketitle

\section{Introduction}

The boolean complex of a finitely generated Coxeter system $(W,S)$ arises from the Bruhat order on $W$. Regarding $W$ as a poset in the Bruhat order, we define the \emph{boolean ideal} $\b(W,S) \subseteq W$ to be the subposet consisting of those elements whose principal (lower) order ideals are boolean. The boolean ideal is a simplicial poset, and, as the name suggests, it is an order ideal of $W$.  In fact, it is maximal among order ideals that are simplicial posets. The \emph{boolean complex} is defined as the regular cell complex $\Delta(W,S)$ whose face poset is the (simplicial) poset $\b(W,S)$. 

The elements in $\b(W,S)$ are easily described: an element in $\b(W,S)$ is an element of $W$ that can be written as a product of distinct elements from the generating set $S$.  Consequently the boolean complex is pure, with each maximal cell having dimension $|S|-1$. These elements play an important role in the study of Coxeter groups because their boolean nature has a variety of consequences related to $R$-polynomials, Kazhdan-Lusztig polynomials, and $g$-polynomials (see \cite{brenti}).

As described above, elements of the boolean ideal are products of distinct elements of the generating set $S$, and thus are governed by the commutativity of elements of $S$.  Consequently, this ideal is determined by the Coxeter graph of $(W,S)$. Recall that the Coxeter graph $G = G(W,S)$ has vertex set $S$, with an edge between vertices $s$ and $t$ if and only if $s$ and $t$ do not commute in $W$. An edge is labeled by the order $m(s,t)$ of $st \in W$ when $m(s,t) > 3$.  Since we are only concerned with the commutativity of generators in this paper, we suppress the labels and consider the underlying unlabeled graph.  Because elements of $S$ are involutions, the elements $s$ and $t$ commute if and only if $m(s,t) =2$, and hence if and only if the vertices $s$ and $t$ are non-adjacent in $G$.  For more information about Coxeter systems, see \cite{bjornerbrenti}.

From the graph $G$, one constructs a simplicial poset $\b(G)$ whose elements are equivalence classes of strings of distinct elements of $S$, where two strings are equivalent if one can be transformed into the other by commuting elements that are non-adjacent in $G$. The partial order on $\b(G)$ is induced by substring inclusion. Of course, when $G = G(W,S)$, the poset $\b(G)$ is isomorphic to the poset $\b(W,S)$, so this construction recovers the boolean ideal from the (unlabeled) Coxeter graph. We refer to $\b(G)$ as the \emph{boolean ideal of $G$}, and to the associated regular cell complex $\Delta(G)$ as the \emph{boolean complex of $G$}.

If $G$ is the complete graph, then $\b(G)$ is the complex of injective words, which has previously been studied by Farmer \cite{farmer}, Bj\"{o}rner and Wachs \cite{bjornerwachs}, and Reiner and Webb \cite{reiner-webb}.  The complete graph is treated in Corollary~\ref{cor:complete}.

\begin{example}\label{ex:a2}
If the graph $G$ consists of two vertices and a single edge between them, then the poset $\b(G)$ and the boolean complex $\Delta(G)$ are depicted in Figure~\ref{fig:ex1}.
\begin{figure}[htbp]
\parbox[c]{.35in}{\mbox{\bf{(a)}}} \parbox[c]{1in}{\scalebox{.4}{\input{ex1-graph.pstex_t}}}
\parbox[c]{.35in}{\mbox{\bf{(b)}}} \parbox[c]{1.25in}{\scalebox{.4}{\input{ex1-poset.pstex_t}}}
\parbox[c]{.35in}{\mbox{\bf{(c)}}} \parbox[c]{1.25in}{\scalebox{.4}{\input{ex1-complex.pstex_t}}}
\caption{\textbf{(a)} A graph $G$.  \textbf{(b)} The poset $\b(G)$.  \textbf{(c)} The boolean complex $\Delta(G)$, with geometric realization $|\Delta(G)| \simeq S^1$.}\label{fig:ex1}
\end{figure}
\end{example}

Because their unlabeled Coxeter graphs are the same, Example~\ref{ex:a2} applies to the Coxeter groups $A_2, B_2/C_2, G_2$ and $I_2(m)$, and shows that in each case the geometric realization of the boolean complex is homotopy equivalent to the unit circle $S^1$.  In this paper, we prove more generally that the boolean complex of any finite simple graph, and hence of any Coxeter system, has geometric realization homotopy equivalent to a wedge of top dimensional spheres, and give a recursive formula for calculating the number of these spheres.  In specific cases, including the finite and affine irreducible Coxeter systems, we calculate this number explicitly.

The subsequent organization of this paper begins with a section precisely defining the primary objects and putting this project in the greater context of the study of the Bruhat order of Coxeter group.  Section~\ref{section:main results} states the main result of the article, that the boolean complex for any finite simple graph has geometric realization homotopy equivalent to a wedge of a particular number of top dimensional spheres.  The homotopy types for the boolean complexes of the finite and affine irreducible Coxeter systems are given as a corollary in this section.  Section~\ref{section:morse} discusses discrete Morse theory, which is the main tool in the proof presented in Section~\ref{section:proof}.  A selection of corollaries to the main theorem are given in Section~\ref{section:corollaries}. Section \ref{sec:Homology} contains a discussion on how generating cycles for the homology of Boolean complex enumerate the spheres occurring in the wedge sum representing the homotopy type of its geometric realization. The paper concludes with suggestions for follow-up questions in Section~\ref{section:follow-up}.

\section{Motivation and definitions}\label{section:motivation}

As noted above, the motivation for this work is the study of Coxeter systems and Coxeter graphs, and the importance of their boolean elements.  Henceforth, all Coxeter systems are assumed to be finitely generated.

We use standard poset terminology throughout this paper, and refer the reader to \cite{ec1} for more background.

\begin{defn}
Let $W$ be a group defined on generators $S$.  The pair $(W,S)$ is a \emph{Coxeter system} if the relations in $W$ are of the form $s^2 = id$ for all $s \in S$, and $(st)^{m(s,t)} = id$ for $s,t \in S$ and $m(s,t) \in \mathbb{Z}^+ \cup \{ \infty \}$.
\end{defn}

Because $S$ consists of involutions, two elements $s,t \in S$ commute if and only if $m(s,t) =2$. The involution condition also implies that $m(s,t) = m(t,s)$. Since $S$ generates $W$, any $\sigma \in W$ can be written as a word on letters in $S$. That is, $\sigma$ admits an expression $\sigma = s_1 \cdots s_\ell$, where $s_i \in S$.  The product $s_1 \cdots s_\ell$ is a \emph{reduced expression} for $\sigma$ if it is of minimal length $\ell$, in which case $\ell$ is the \emph{length} of $\sigma$.

\begin{defn}
For a Coxeter system $(W,S)$, the \emph{(strong) Bruhat order} is the partial order on $W$ where $\sigma \leq \tau$ if and only if $\sigma$ admits a reduced expression that is a subword of a reduced expression of $\tau$.
\end{defn}

\begin{rem}\label{rem:rank -1}
The Bruhat order makes $W$ a ranked poset, with rank determined by length.  Because the minimal element in a simplicial poset corresponds to the empty face in the geometric realization of that poset, we make the the convention that this minimal element has rank $-1$, thus emphasizing that the face data in the poset is contained in the non-negative ranks.
\end{rem}

The structure of the Bruhat order for finite Coxeter groups was studied by the second author in \cite{tenner}.  One aspect of this study was a description of elements with boolean principal order ideals.

\begin{defn}
Let $(W,S)$ be a Coxeter system and regard $W$ as a poset under the Bruhat order. An element $\sigma \in W$ is \emph{boolean} if its principal (lower) order ideal in $W$ is isomorphic to a boolean algebra.  The \emph{boolean ideal} $\b(W,S) \subseteq W$ is the subposet of boolean elements.  
\end{defn}

It is clear from the definition that $\b(W,S)$ is an order ideal in the Bruhat order, thus justifying the terminology. It is also clear, by construction, that $\b(W,S)$ is a simplicial poset.

Because the boolean poset is simplicial, the interval between any two comparable elements is boolean and, in particular, a lattice.  By \cite{brenti}, this has several implications for the Kazhdan-Lusztig polynomials, $R$-polynomials, and $g$-polynomials.  For example, for any $\sigma, \tau \in \b(W,S)$ with $\sigma \le \tau$, we have
\begin{equation*}
P_{\sigma, \tau}(q) = g([\sigma, \tau]^*;q)  =1,
\end{equation*}
\noindent where $P_{\sigma, \tau}(q)$ is the Kazhdan-Lusztig polynomial, and
\begin{equation*}
R_{\sigma, \tau}(q) = \widetilde{G}_{[\sigma, \tau]}(q) = (q-1)^{\ell(\tau) - \ell(\sigma)},
\end{equation*}
\noindent where $\ell$ is the length function.  Further structural and computational consequences can be found in \cite{brenti}.

The results in \cite{tenner} state that for the finite Coxeter groups of types $A$, $B$, and $D$, boolean elements can be characterized by pattern avoidance.  Moreover, the boolean elements of these groups are enumerated by length.  For example, the number of boolean elements of length $k$ in the finite Coxeter group $A_n$ is
\begin{equation}\label{eqn:boolean of length k in A_n}
\sum_{i=1}^k\binom{n+1-i}{k+1-i}\binom{k-1}{i-1}.
\end{equation}
As the unlabeled Coxeter graphs for $A_n$ and $B_n$ are identical, the boolean elements of length $k$ in $B_n$ are also enumerated by \eqref{eqn:boolean of length k in A_n}.  For the group $D_n$, the enumeration is more complicated, and a recursive formula is given in \cite{tenner}.

The following lemma is immediate from the description of the Bruhat order above, and gives a useful characterization of boolean elements.

\begin{lem} \label{lem:BooleanElements}
Let $(W,S)$ be a Coxeter system.  An element of $W$ is boolean if and only if it has no repeated letters in its reduced expressions.
\end{lem}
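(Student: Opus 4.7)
The plan is to argue the two directions separately. The unifying observation is that the atoms of the Bruhat order below $\sigma$ are exactly the generators appearing in its reduced expressions.

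For the forward direction, I would use the subword characterization of Bruhat order: the rank-one elements below $\sigma$ are precisely the generators $s \in S$ that appear in some reduced expression of $\sigma$. Since Matsumoto moves (both commutations and long braid moves) preserve the set of distinct letters used, this is the same as the set of generators appearing in \emph{every} reduced expression, which we may call the support $S_\sigma \subseteq S$. A boolean algebra of rank $\ell$ has exactly $\ell$ atoms, so if $\sigma$ is boolean with $\ell(\sigma) = \ell$, then $|S_\sigma| = \ell$. Any reduced expression of $\sigma$ has $\ell$ letters, all drawn from $S_\sigma$, so there can be no repetitions.

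For the converse, I would fix a reduced expression $\sigma = s_{a_1} \cdots s_{a_\ell}$ with all $s_{a_i}$ distinct and prove by induction on $\ell$ that $[\mathrm{id},\sigma] \cong B_\ell$. Let $\sigma' = s_{a_1} \cdots s_{a_{\ell-1}}$; the inductive hypothesis gives $[\mathrm{id},\sigma'] \cong B_{\ell-1}$. For any $\tau \le \sigma'$, every reduced expression of $\tau$ is a subword of some reduced expression of $\sigma'$, so $S_\tau \subseteq \{s_{a_1}, \ldots, s_{a_{\ell-1}}\}$; in particular $s_{a_\ell} \notin S_\tau$, so $s_{a_\ell}$ is not a right descent of $\tau$, and right-multiplication by $s_{a_\ell}$ yields a reduced expression for an element $\tau s_{a_\ell} \le \sigma$. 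Hence $\tau \mapsto \tau s_{a_\ell}$ embeds $[\mathrm{id},\sigma']$ into $[\mathrm{id},\sigma]$, with image disjoint from $[\mathrm{id},\sigma']$ because supports differ on $s_{a_\ell}$. Conversely, the subword characterization shows every element of $[\mathrm{id},\sigma]$ either omits $s_{a_\ell}$ (and hence lies in $[\mathrm{id},\sigma']$) or includes it (and hence lies in $[\mathrm{id},\sigma']\cdot s_{a_\ell}$). The assignment $T \mapsto \prod_{a_i \in T} s_{a_i}$, with letters written in the order inherited from $\sigma$, then gives a poset isomorphism $2^{\{a_1, \ldots, a_\ell\}} \to [\mathrm{id},\sigma]$: injectivity comes from the fact that the image has support $\{s_{a_i} : i \in T\}$, and both directions of $T \subseteq T' \iff \phi(T) \le \phi(T')$ follow from the subword characterization together with this support computation.

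The only step requiring any care is the claim that a generator outside $S_\tau$ cannot be a right descent of $\tau$. This is immediate from the strong exchange condition, since a right descent $s$ of $\tau$ must appear at the end of some reduced expression of $\tau$ and therefore lie in $S_\tau$. With that piece in hand, neither direction poses a real obstacle, so I expect the bookkeeping of the induction to be the main (rather than hard) work.
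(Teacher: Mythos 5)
Your proof is correct, and it takes the same route the paper has in mind: the paper states this lemma without proof, calling it ``immediate from the description of the Bruhat order,'' i.e.\ from the subword characterization, which is exactly the tool you use. Your two directions --- counting atoms of $[\mathrm{id},\sigma]$ via the support $S_\sigma$, and the induction splitting $[\mathrm{id},\sigma]$ into $[\mathrm{id},\sigma'] \sqcup [\mathrm{id},\sigma']\,s_{a_\ell}$ with the descent argument guaranteeing reducedness --- are a sound and complete elaboration of the details the paper leaves to the reader.
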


It follows from the lemma that every maximal element in $\b(W,S)$ has the same rank, equal to $|S|-1$.  

\begin{defn} 
Let $(W,S)$ be a Coxeter system. The \emph{boolean complex} of $(W,S)$ is the regular cell complex $\Delta(W,S)$ whose face poset is the simplicial poset $\b(W,S)$.
\end{defn}

The existence of such a complex $\Delta(W,S)$ follows from a well-known result about simplicial posets, and in fact about CW-posets (see \cite{bjorner}).

The minimal element of $\b(W,S)$ represents the empty cell, and an element of rank $k \geq 0$ represents a $k$-dimensional cell (see Remark~\ref{rem:rank -1}). One can think of the cells in $\b(W,S)$ as simplices, because the minimal subcomplex containing each cell is isomorphic to a simplex of the same dimension. Nevertheless, the boolean complex itself is not a simplicial complex because the cells are not determined by the vertices they contain; for instance, there are two 1-cells, $12$ and $21$, with the same vertices in Example \ref{ex:a2}.  

One obtains a geometric realization $|\Delta(W,S)|$ of the boolean complex in the standard way, by taking one geometric simplex of dimension $k$ for each cell of dimension $k$, and gluing them together according to the face poset.  The \emph{homotopy type} of a complex is understood to mean the homotopy type of its geometric realization.

The main result of this paper, Theorem~\ref{thm:main result}, shows that $|\Delta(W,S)|$ has the homotopy type of a wedge of spheres of dimension $|S|-1$. Moreover, we give a recursive formula for computing the number of spheres in the wedge. To describe this recursion, we present an alternative construction of the boolean complex, in terms of the unlabeled Coxeter graph of $(W,S)$.

\begin{defn}
The \emph{Coxeter graph} $G(W,S)$ of a Coxeter system $(W,S)$ has vertex set $S$ and an edge between $s$ and $t$ if and only if $m(s,t) > 2$.  An edge corresponding to $m(s,t) > 3$ is labeled by $m(s,t)$. The \emph{unlabeled Coxeter graph} is the underlying simple graph obtained by omitting all edge labels.
\end{defn}

A Coxeter system $(W,S)$ can be recovered from its Coxeter graph $G$. Taking the vertex set of $G$ to be $S$, one forms the group $W$ generated by $S$ subject to the relations mandated by the edges in $G$, and the condition that $S$ should consist of involutions. An unlabeled Coxeter graph, however, contains less information, and only allows one to determine when two elements in $S$ commute. Thus, if one is only concerned with commutativity of generators, this graph suffices.

\begin{example}
Let $W$ be the group generated by $S = \{a,b,c,d\}$, with relations $s^2 = id$ for all $s \in S$, and $m(a,c) = m(a,d) = m(c,d) =2,m(a,b) = 3$, $m(b,c) = m(b,d) = 4$.  The (unlabeled) Coxeter graph $G(W,S)$ is shown below.
\begin{equation*}
\scalebox{.4}{\begin{picture}(0,0)%
\epsfig{file=ex-coxgraph.pstex}%
\end{picture}%
\setlength{\unitlength}{3947sp}%
\begingroup\makeatletter\ifx\SetFigFont\undefined%
\gdef\SetFigFont#1#2#3#4#5{%
  \reset@font\fontsize{#1}{#2pt}%
  \fontfamily{#3}\fontseries{#4}\fontshape{#5}%
  \selectfont}%
\fi\endgroup%
\begin{picture}(3607,2196)(4118,-5344)
\put(7126,-5236){\makebox(0,0)[lb]{\smash{{\SetFigFont{25}{30.0}{\familydefault}{\mddefault}{\updefault}$d$}}}}
\put(4126,-3886){\makebox(0,0)[lb]{\smash{{\SetFigFont{25}{30.0}{\familydefault}{\mddefault}{\updefault}$a$}}}}
\put(5626,-3886){\makebox(0,0)[lb]{\smash{{\SetFigFont{25}{30.0}{\familydefault}{\mddefault}{\updefault}$b$}}}}
\put(7126,-3436){\makebox(0,0)[lb]{\smash{{\SetFigFont{25}{30.0}{\familydefault}{\mddefault}{\updefault}$c$}}}}
\end{picture}%
}
\end{equation*}
\end{example}

For a finite simple graph $G$ with vertex set $S$, define the poset $\b(G)$ as follows. First, let $\W(S)$ be the set of words on $S$ with no repeated letters, ordered by the subword order relation. A typical element in $\W(S)$ is thus of the form $s_1 s_2 \cdots s_\ell$, where $s_1, s_2, \ldots, s_\ell$ are distinct elements of $S$. Next, consider the equivalence relation generated by the requirement that
\[ s_1 \cdots s_{i-1} s_i s_{i+1} s_{i+2} \cdots s_\ell \sim s_1 \cdots s_{i-1} s_{i+1} s_{i} s_{i+2} \cdots s_\ell \]
if $\{s_i,s_{i+1} \}$ is not an edge in $G$.  Let $\b(G)$ be the set of equivalence classes of $\W(S)$ with respect to this equivalence relation. A preimage of an element $\sigma \in \b(G)$ is called a \emph{word representative}. Note that the set of letters occurring in each word representative of $\sigma$ is the same. We say that $\sigma$ \emph{contains} a letter $s$ if $s$ occurs in the string representatives of $\sigma$. A partial ordering is induced on the set $\b(G)$ from the subword order. That is, $\sigma < \tau$ in $\b(G)$ if some word representative of $\sigma$ is a subword of a word representative of $\tau$.

\begin{defn}
The \emph{boolean ideal} of a finite simple graph $G$ is the poset $\b(G)$.
\end{defn}

The motivation for the construction of $\b(G)$ is of course the following obvious fact, which we record as a lemma.

\begin{lem}
If $G$ is the unlabeled Coxeter graph of a Coxeter system $(W,S)$, then $\b(G) \cong \b(W,S)$.
\end{lem}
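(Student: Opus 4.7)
The plan is to construct the evident map $\phi \colon \b(G) \to \b(W,S)$ that sends the equivalence class of a word $s_1 s_2 \cdots s_\ell \in \W(S)$ to the corresponding product $s_1 s_2 \cdots s_\ell \in W$, and then verify that $\phi$ is a well-defined poset isomorphism. Well-definedness is essentially by construction: if two words in $\W(S)$ differ by a single adjacent transposition of letters that form a non-edge in $G$, then those letters commute in $W$, so they represent the same element of $W$. Any word in $\W(S)$ has distinct letters, so it is automatically a reduced expression (it is already of minimal length once one knows nothing shorter can represent the same element), and the resulting element lies in $\b(W,S)$ by Lemma~\ref{lem:BooleanElements}. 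Surjectivity is immediate from the same lemma: every boolean element has a reduced expression with distinct letters, which is a word in $\W(S)$.

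The substantive step is injectivity. Suppose $w, w' \in \W(S)$ represent the same boolean element $\sigma$. Since both are reduced expressions for $\sigma$, Matsumoto's (equivalently Tits') theorem provides a sequence of braid moves transforming $w$ into $w'$, where each move is either a commutation $st \leftrightarrow ts$ (with $m(s,t) = 2$) or a longer braid relation $\underbrace{sts\cdots}_{m(s,t)} \leftrightarrow \underbrace{tst\cdots}_{m(s,t)}$ (with $m(s,t) \geq 3$). The key observation is that any braid move of length $\geq 3$ necessarily involves a repeated letter in the active subword, and hence the move can neither apply to nor produce a reduced expression with distinct letters. Since every intermediate expression in the Matsumoto sequence is reduced and represents the same boolean element $\sigma$, each intermediate expression must also consist of distinct letters, so only commutation moves actually occur. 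But a commutation move in $\W(S)$ is exactly a generator of the equivalence relation defining $\b(G)$, so $w \sim w'$ in $\b(G)$.

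It remains to see that $\phi$ and $\phi^{-1}$ are order-preserving. This is essentially tautological: the order on $\b(G)$ is defined by subword inclusion of word representatives, while the Bruhat order restricted to $\b(W,S)$ says $\sigma \leq \tau$ iff some reduced expression for $\sigma$ is a subword of a reduced expression for $\tau$. Since reduced expressions of boolean elements are precisely words with distinct letters, the two notions of subword coincide.

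The main obstacle is the injectivity step: one must invoke the nontrivial Matsumoto/Tits theorem and then rule out all non-commutation braid moves by the observation that they require repeated letters. Everything else is essentially a matter of unwinding definitions.
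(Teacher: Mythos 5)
The paper offers no argument for this lemma at all---it is recorded as an ``obvious fact'' immediately after the construction of $\b(G)$---so there is no proof to compare yours against move for move; what you have written is a reasonable way to make the folklore statement rigorous, and the core of it is sound. The map $\phi$ is well defined, surjectivity follows from Lemma~\ref{lem:BooleanElements}, and your injectivity argument via Tits/Matsumoto is correct: every intermediate word in the sequence of braid moves is a reduced expression for the same boolean element, hence repetition-free by Lemma~\ref{lem:BooleanElements}, so a braid move of length $m(s,t)\ge 3$ can never fire (its active factor $sts\cdots$ repeats a letter), and the surviving commutation moves are exactly the generators of the equivalence relation defining $\b(G)$; the order-compatibility in both directions then reduces to the subword description of Bruhat order.

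The one soft spot is your assertion that a word with distinct letters ``is automatically a reduced expression,'' supported only by the parenthetical ``it is already of minimal length once one knows nothing shorter can represent the same element,'' which is circular---that is precisely what needs proof. This fact is genuinely load-bearing in your argument: without it $\phi$ need not land in $\b(W,S)$, and the injectivity and order-preservation steps both use that an arbitrary word representative of a class in $\b(G)$ is a reduced expression of its image. The fact is true and standard, but it deserves either a citation (it is implicit in the paper's own remark that $\b(W,S)$ consists of products of distinct generators, and appears as a standard exercise in Bj\"orner--Brenti) or a short proof, e.g.\ via the geometric representation: if $s_1,\dots,s_k$ are distinct and $s_1\cdots s_{k-1}$ is reduced, then $s_{k-1}\cdots s_1$ applied to $\alpha_{s_k}$ keeps the coefficient of $\alpha_{s_k}$ equal to $1$ (each $s_i$ with $i\ne k$ only adds multiples of $\alpha_{s_i}$), so the resulting root is never a simple root being negated and stays positive, whence $\ell(s_1\cdots s_k)=k$. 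With that lemma supplied, your proof is complete.
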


It can be shown directly, or via the relationship between graphs and Coxeter systems, that $\b(G)$ is a simplicial poset for all finite simple graphs $G$.

\begin{defn}
The \emph{boolean complex} of a finite simple graph $G$ is the regular cell complex $\Delta(G)$ associated to $\b(G)$.
\end{defn}

The main result of this paper, as mentioned above, can be equivalently stated as saying that for any finite simple graph $G$ with vertex set $S$, the geometric realization $|\Delta(G)|$ is homotopy equivalent to a wedge of $(|S|-1)$-dimensional spheres.  It is this version of the result that we shall prove. The promised recursive formula for the number of spheres is given in terms of basic graph operations. These results are stated precisely in Theorem \ref{thm:main result}.

The Euler characteristic $\chi$ of a regular cell complex $\Delta$, and likewise the Euler characteristic of its geometric realization $|\Delta|$, is the alternating sum of the number of faces $f_r$ of each rank $r \ge 0$ in $\Delta$:
\begin{equation*}
\chi(\Delta) = \sum_{r \ge 0} (-1)^r f_r.
\end{equation*}
\noindent Given Remark~\ref{rem:rank -1}, this can also be computed by enumerating each non-negative rank in the corresponding simplicial poset.
In particular, the enumeration from \cite{tenner} cited in \eqref{eqn:boolean of length k in A_n} enables the calculation of the Euler characteristic of the boolean complex for the Coxeter group $A_n$.

\begin{cor}\label{cor:euler characteristic}
For all $n \ge 1$,
\begin{equation*}
\chi(\Delta(A_n)) = (-1)^{n-1}f(n-1) + 1,
\end{equation*}
\noindent where $\{f(n)\}$ are the Fibonacci numbers.
\end{cor}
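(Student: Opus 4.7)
The plan is to compute the Euler characteristic directly from the length enumeration \eqref{eqn:boolean of length k in A_n}. Write $b_k^{(n)}$ for the expression in \eqref{eqn:boolean of length k in A_n} and set $P_n(x) := \sum_{k \ge 0} b_k^{(n)}\, x^k$, with the convention $b_0^{(n)} = 1$ accounting for the identity element. By Remark \ref{rem:rank -1}, a boolean element of length $k \ge 1$ contributes a cell of dimension $k-1$, so
\[ \chi(\Delta(A_n)) \;=\; \sum_{k \ge 1}(-1)^{k-1} b_k^{(n)} \;=\; 1 - P_n(-1). \]
Thus the corollary is equivalent to the identity $P_n(-1) = (-1)^n f(n-1)$.

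The central step is to establish the three-term recursion
\[ b_k^{(n)} \;=\; b_k^{(n-1)} + 2\, b_{k-1}^{(n-1)} - b_{k-1}^{(n-2)} \qquad (n \ge 2), \]
which translates to $P_n(x) = (1+2x)P_{n-1}(x) - x P_{n-2}(x)$ and, at $x = -1$, to the linear recurrence $Q_n = -Q_{n-1} + Q_{n-2}$ for $Q_n := P_n(-1)$. I would prove the recursion combinatorially by classifying each boolean $\sigma$ of length $k$ in $A_n$ according to its intersection with $\{s_{n-1}, s_n\}$. If $s_n \notin \sigma$, then $\sigma \in \b(A_{n-1})$, contributing $b_k^{(n-1)}$. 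If $s_n \in \sigma$ but $s_{n-1} \notin \sigma$, then $s_n$ commutes with every other letter of $\sigma$, so removal of $s_n$ is a bijection onto the set of boolean elements of length $k-1$ in the Coxeter subsystem on $\{s_1, \ldots, s_{n-2}\}$, yielding $b_{k-1}^{(n-2)}$. Finally, if both $s_{n-1}$ and $s_n$ occur in $\sigma$, then because $s_n$ commutes with every generator except $s_{n-1}$, every word representative of $\sigma$ has $s_n$ on the same side of $s_{n-1}$; thus $\sigma$ is determined by $\sigma \setminus \{s_n\} \in \b(A_{n-1})$ (which must contain $s_{n-1}$) together with a choice of side, contributing $2(b_{k-1}^{(n-1)} - b_{k-1}^{(n-2)})$. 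Summing the three cases yields the recursion; as a cross-check, one may verify it algebraically by applying Pascal's identity to the binomials in \eqref{eqn:boolean of length k in A_n}.

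The proof concludes by induction on $n$. The base values $Q_1 = 1 - b_1^{(1)} = 0$ and $Q_2 = 1 - b_1^{(2)} + b_2^{(2)} = 1$ agree with $(-1)^1 f(0)$ and $(-1)^2 f(1)$ respectively, and the sequence $R_n := (-1)^n f(n-1)$ satisfies $R_n = -R_{n-1} + R_{n-2}$ by virtue of the Fibonacci identity $f(n-1) = f(n-2) + f(n-3)$. Hence $Q_n = R_n = (-1)^n f(n-1)$ for all $n \ge 1$, which gives $\chi(\Delta(A_n)) = 1 - (-1)^n f(n-1) = (-1)^{n-1} f(n-1) + 1$, as required. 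The main obstacle is the mixed case of the recursion: one must argue carefully, using the commutation equivalence defining $\b(G)$, that when both $s_n$ and $s_{n-1}$ appear in $\sigma$ the two possible relative orders of $s_n$ and $s_{n-1}$ correspond to exactly two distinct equivalence classes, no more and no fewer.
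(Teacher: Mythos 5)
Your proposal is correct, and every step checks out: the identification $\chi(\Delta(A_n)) = \sum_{k\ge 1}(-1)^{k-1}b_k^{(n)} = 1 - P_n(-1)$ matches the convention of Remark~\ref{rem:rank -1}; the three-term recursion $b_k^{(n)} = b_k^{(n-1)} + 2b_{k-1}^{(n-1)} - b_{k-1}^{(n-2)}$ is valid, and your case analysis handles the one delicate point correctly (since $s_{n-1}$ and $s_n$ do not commute, no commutation move ever swaps them, so the side of $s_{n-1}$ on which $s_n$ sits is an invariant of the equivalence class, and deletion of $s_n$ together with that side is a bijection onto pairs as claimed); the evaluation at $x=-1$ and the induction with $Q_1=0$, $Q_2=1$ then gives $(-1)^n f(n-1)$, with the standard convention $f(0)=0$, which is also what the statement itself needs at $n=1$. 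The route, however, differs from what the paper intends. The paper gives no explicit proof and presents the corollary as a direct calculation from the closed-form enumeration \eqref{eqn:boolean of length k in A_n}, i.e.\ evaluating the alternating sum of those binomial expressions via binomial--Fibonacci identities (your parenthetical Pascal cross-check is essentially that computation, and it does go through with two applications of Pascal's rule). You instead bypass the closed formula entirely and prove a combinatorial recursion directly on $\b(A_n)$ by conditioning on the last two generators; this is self-contained, and it is pleasant that your recursion is a face-count shadow of the deletion/contraction/extraction recursion of Theorem~\ref{thm:main result} applied to the path graph (which is how the Fibonacci answer reappears later, since $\bn(A_n)=\bn(A_{n-1})+\bn(A_{n-2})$ there). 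What the paper's implicit route buys is brevity given that \eqref{eqn:boolean of length k in A_n} is already quoted from the literature; what yours buys is independence from that formula and a structural explanation of why Fibonacci numbers appear. One cosmetic remark: your recursion as stated for $n=2$ implicitly invokes the empty system $A_0$ (with $b^{(0)}_0=1$), but since your induction only uses the recursion for $n\ge 3$ with the bases $n=1,2$ computed directly, nothing is affected.
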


Note the relationship between $\{\chi(\Delta(A_n))\}$ and sequences A008346 and A119282 in \cite {oeis}: sequence A008346 is equal to $\{|\chi(\Delta(A_n))|\}$, while sequence A119282 is equal to $\{-\chi(\Delta(A_n))\}$.  Also, Corollary~\ref{cor:euler characteristic} foreshadows the fact that $|\Delta(A_n)|$ is homotopy equivalent to the wedge of $f(n-1)$ $(n-1)$-spheres.

Before stating the main results precisely, it is informative to mention similar work which has been done for the \emph{independence complex}.  Ehrenborg and Hetyei \cite{ehrenborg} and Kozlov \cite{kozlov} prove, each in the context of different results and frameworks, that the complex of sparse sets of $[n]$ is contractible in some cases and homotopy equivalent to a sphere in the remaining cases.  In the context of the Bruhat order, the sparse subsets of $[n]$ correspond to the fully commutative elements in the Coxeter group $A_n$.  That is, all letters in such a reduced expression commute with each other.  Thus, these results show that the complex formed from the subposet of $\b(A_n)$ consisting of the fully commutable elements is either contractible or homotopy equivalent to a sphere.

\section{Main results}\label{section:main results}

In this section, we state the main result of the article and draw consequences for the classical Coxeter groups.  The proof of the main theorem is rather technical, and is postponed until Section~\ref{section:proof}.  

It is convenient to use the notation
 \[ b \cdot S^r :=  \bigvee_{i=1}^b S^r \]
for a wedge sum of $b$ spheres of dimension $r$. Since the wedge sum is the coproduct in the category of pointed spaces, $0 \cdot S^r$ then denotes a single point. 

Graph-theoretic notation will also be used in the statement of the theorem and in its proof.

\begin{defn}
For a finite graph $G$, let $|G|$ denote the number of vertices in $G$.
\end{defn}

\begin{defn}
Let $G$ be a finite simple graph and $e$ an edge in $G$.
\begin{itemize}
\item \emph{Deletion:} $G-e$ is the graph obtained by deleting the edge $e$.
\item \emph{Simple contraction:} $G/e$ is the graph obtained by contracting the edge $e$ and then removing all loops and redundant edges.
\item \emph{Extraction:} $G-[e]$ is the graph obtained by removing the edge $e$ and its incident vertices.
\end{itemize}
\end{defn}

\begin{defn}
For $n \ge 1$, let $\delta_n$ be the graph consisting of $n$ disconnected vertices.
\end{defn}

We can now state the main theorem of this paper.  We use the symbol $\simeq$ to denote homotopy equivalence.

\begin{thm}\label{thm:main result}
For every nonempty, finite simple graph $G$, there is an integer $\bn(G)$ so that
\begin{equation*}
|\Delta(G)| \simeq \bn(G) \cdot S^{|G|-1}.
\end{equation*}
Moreover, the values $\bn(G)$ can be computed using the recursive formula
\begin{equation} \label{eqn:m + edge}
  \bn(G) = \bn(G - e) + \bn(G/e) + \bn(G -[e]),
\end{equation}
if $e$ is an edge in $G$ such that $G -[e]$ is nonempty, with initial conditions
$$\bn(A_2) = 1\text{\ and\ } \bn(\delta_n) = 0,$$
where $A_2$ is the graph with two vertices and one edge.
\end{thm}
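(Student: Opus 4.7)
The plan is to proceed by induction on $|V(G)| + |E(G)|$ using discrete Morse theory on the face poset $\b(G)$. The base cases are handled directly: when $G = \delta_n$ has no edges, all generators commute, so $\b(\delta_n)$ is the full Boolean lattice $B_n$ and $|\Delta(\delta_n)|$ is the $(n-1)$-simplex, which is contractible and matches $\bn(\delta_n) = 0$. When $G = A_2$, Example~\ref{ex:a2} already exhibits $|\Delta(A_2)| \simeq S^1$, matching $\bn(A_2) = 1$. Together these dispose of every $G$ for which no edge satisfies the recursion's hypothesis that $G - [e]$ is nonempty.

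For the inductive step, fix an edge $e = \{s,t\}$ with $G - [e]$ nonempty. The goal is to build an acyclic matching on $\b(G) \setminus \{\hat 0\}$ whose critical (unmatched) cells consist of a single $0$-cell together with exactly $\bn(G-e) + \bn(G/e) + \bn(G-[e])$ cells in top dimension $|G|-1$. By the fundamental theorem of discrete Morse theory (reviewed in Section~\ref{section:morse}), $|\Delta(G)|$ will then be homotopy equivalent to a CW complex with precisely these cells, and since only dimensions $0$ and $|G|-1$ appear, this model is automatically a wedge of that many $(|G|-1)$-spheres --- giving both the homotopy type and the recursion simultaneously.

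To construct the matching I would stratify $\b(G) \setminus \{\hat 0\}$ using $e$ as a pivot into three pieces: cells whose letter support contains neither $s$ nor $t$, cells containing both, and cells containing exactly one. The first stratum is a copy of $\b(G-[e])$, so the inductive acyclic matching for $G-[e]$ transfers verbatim and contributes $\bn(G-[e])$ critical top cells. On the stratum where both $s$ and $t$ occur --- necessarily in a fixed relative order, since $s$ and $t$ do not commute in $G$ --- one identifies the forced $st$-adjacency with the merged vertex of $G/e$, obtaining a copy of $\b(G/e)$ whose inductive matching contributes $\bn(G/e)$ critical top cells. The remaining cells, combined with the quotient $\b(G) \to \b(G-e)$ that identifies the two orderings of $s,t$, carry a copy of the inductive matching for $\b(G-e)$, yielding the final $\bn(G-e)$ critical top cells.

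The main obstacle is verifying that this composite matching is genuinely acyclic on all of $\b(G) \setminus \{\hat 0\}$. The three sub-matchings do not decouple cleanly: the covering relations of $\b(G)$ can move a cell across strata (e.g.\ removing an $s$ or $t$ may send a cell from the third stratum to the second or first), so matched arrows from different pieces could conspire to form a cycle in the modified Hasse diagram. Controlling this requires fixing a consistent reference ordering of the letters of $S$ (with $s$ and $t$ placed last, say), orienting every sub-matching against that ordering, and then case-checking at the stratum interfaces that no cycle can arise. Once acyclicity is established, counting critical top cells is immediate and gives $\bn(G) = \bn(G-e) + \bn(G/e) + \bn(G-[e])$, closing the induction.
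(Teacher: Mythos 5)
Your overall strategy (edge induction plus discrete Morse theory) is the same as the paper's, but the decomposition you propose does not work as stated, and the accounting of critical cells is wrong. First, the stratum of cells containing both $s$ and $t$ is \emph{not} a copy of $\b(G/e)$, nor of the subposet of $\b(G/e)$ containing the merged vertex $x$: since $s$ and $t$ do not commute, both orders $st$ and $ts$ occur among such cells, and $s,t$ need not be adjacent at all (e.g.\ $sut$ with $u$ joined to both), so there is no ``forced $st$-adjacency.'' The correct piece, used in the paper, is the strictly smaller set $\b(G)_e$ of elements admitting a representative $\alpha s t\gamma$, which is isomorphic to $\b(G/e)_x$; its unmatched cells number $\bn(G/e)+\bn(G-[e])$, and this is where the $\bn(G-[e])$ term actually comes from. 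Second, your $\bn(G-[e])$ term instead comes from the stratum of cells avoiding both $s$ and $t$, but the top cells there have dimension $|G|-3$, so they cannot be critical top cells of $\b(G)$; as described, your Morse complex would have critical cells in dimension $|G|-3$ and the count $\bn(G-e)+\bn(G/e)+\bn(G-[e])$ in top dimension would not be achieved. Third, transporting the inductive matching of $\b(G-e)$ onto the cells containing exactly one of $s,t$ is not well defined: matched edges of that matching may add or delete the letters $s$ or $t$ and thus pair cells across your strata, and, more seriously, a matched covering relation in $\b(G-e)$ need not correspond to a covering relation in $\b(G)$ (an element of the form $\alpha s w_1\cdots w_r t\gamma$ covers more things after $e$ is deleted).

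This last point, together with the acyclicity problem you flag as the ``main obstacle,'' is exactly why a bare induction on the statement of the theorem cannot close. The paper strengthens the inductive hypothesis: it produces, for every vertex $s$, an $\H$-matching at $s$ satisfying properties $\h{1}$--$\h{3}$, where $\h{3}$ (matched edges only append $s$ on the right or delete a letter to the left of $s$) is what guarantees both that the matching on $\b(G-e)$ pulls back along the projection $\b(G)\twoheadrightarrow\b(G-e)$ to genuine covering relations (Lemma~\ref{lem:NoBadEdgesInM}) and that no directed cycle can cross between $\b(G)_{\widetilde e}$ and $\b(G)_e$, while $\h{2}$ supplies the count $\bn(G/e)+\bn(G-[e])$ of unmatched cells in $\b(G)_e$. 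Your proposed remedy --- fixing a reference ordering of the letters and ``case-checking at the stratum interfaces'' --- is not a substitute for this: without carrying a property like $\h{3}$ through the induction there is no control over which letters the inherited matchings add or delete, so neither well-definedness of the pulled-back matchings nor acyclicity of their union can be verified. To repair the argument you would need to (i) replace your support-based stratification by the decomposition $\b(G)=\b(G)_{\widetilde e}\sqcup\b(G)_e$ with $\b(G)_e\cong\b(G/e)_x$, and (ii) prove the stronger vertex-based statement $\H(\e)$ (including an isolated-vertex case, since the hypothesis must hold at \emph{every} vertex) rather than the theorem itself.
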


The integer $\bn(G)$ will be called the \emph{boolean number} of the graph $G$. Notice that by formally
setting $\bn(\emptyset) = 1$, the boolean number can be extended so that the recursive formula in equation~\eqref{eqn:m + edge} holds for $A_2$.

As discussed in Section \ref{section:motivation}, the above theorem implies that the geometric realization of the boolean complex of a Coxeter system $(W,S)$ is homotopy equivalent to a wedge of spheres of dimension $|S|-1$. The number of spheres occurring in the wedge can be calculated recursively using equation~\eqref{eqn:m + edge}. This process can be greatly expedited by the following proposition, which shows that the boolean number is multiplicative with respect to connected components.

\begin{prop}\label{prop:disjoint union}
If $G = H_1 \sqcup H_2$ for graphs $H_1$ and $H_2$, then
\[ \Delta(G) = \Delta(H_1) * \Delta(H_2), \]
where $*$ denotes simplicial join, and consequently
\[ |\Delta(G)| \simeq \bn(H_1)\bn(H_2) \cdot S^{|H_1| + |H_2| -1}. \]
In particular, $\bn(G) = \bn(H_1) \bn(H_2)$.
\end{prop}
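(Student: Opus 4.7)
The plan is to identify $\Delta(G)$ with the simplicial join $\Delta(H_1) * \Delta(H_2)$ at the level of regular cell complexes, and then read off the homotopy statement and the multiplicativity of $\bn$ from standard facts about the join.

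For the cell-complex identification, the decisive observation is that a letter from $V(H_1)$ and a letter from $V(H_2)$ are in distinct connected components of $G$ and hence non-adjacent, so they commute in the equivalence relation defining $\b(G)$. Given any $\sigma \in \b(G)$ and any word representative $w$ of $\sigma$, successive adjacent swaps of letters of $H_2$ past letters of $H_1$ produce a representative of the form $w_1 w_2$, where $w_i$ is a word on $V(H_i)$. I would verify that the assignment $\sigma \mapsto (\sigma_1, \sigma_2)$, with $\sigma_i \in \b(H_i)$ the class of $w_i$, is a well-defined order-preserving bijection between $\b(G)$ and the product poset $\b(H_1) \times \b(H_2)$ (each factor including its minimum element). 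Using the rank convention of Remark~\ref{rem:rank -1}, the rank of $\sigma$ in $\b(G)$ equals $\mathrm{rk}(\sigma_1) + \mathrm{rk}(\sigma_2) + 1$, which matches the cell-dimension formula for the simplicial join. Since the face poset determines a regular cell complex uniquely, this gives the identity $\Delta(G) = \Delta(H_1) * \Delta(H_2)$.

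For the homotopy conclusion, I would apply Theorem~\ref{thm:main result} to each factor to obtain $|\Delta(H_i)| \simeq \bn(H_i) \cdot S^{|H_i|-1}$, and then invoke the standard homotopy equivalences $|X * Y| \simeq |X| * |Y|$ for CW complexes and $(p \cdot S^m) * (q \cdot S^n) \simeq pq \cdot S^{m+n+1}$ (the latter following, for instance, from $X * Y \simeq \Sigma(X \wedge Y)$ and the distributivity of smash over wedge). Together these yield
\begin{equation*}
|\Delta(G)| \simeq |\Delta(H_1)| * |\Delta(H_2)| \simeq \bn(H_1)\bn(H_2) \cdot S^{|H_1| + |H_2| - 1}.
\end{equation*}
Comparing this with Theorem~\ref{thm:main result} applied to $G$ itself, and using that the number of top-dimensional spheres in such a wedge is a homotopy invariant (it is the rank of the top reduced homology group), I conclude $\bn(G) = \bn(H_1)\bn(H_2)$.

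The main obstacle is the careful verification in the second paragraph that the canonical form $w_1 w_2$ descends to a well-defined poset bijection $\b(G) \to \b(H_1) \times \b(H_2)$; once this bijection is in hand, everything else reduces to standard homotopy theory.
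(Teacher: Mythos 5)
Your proposal is correct and follows essentially the same route as the paper: disjointness of $H_1$ and $H_2$ makes cross-component letters commute, so $\Delta(G)$ is the simplicial join $\Delta(H_1) * \Delta(H_2)$, whose realization is the topological join, and the sphere count multiplies by Theorem~\ref{thm:main result}. You simply flesh out details the paper leaves implicit (the poset identification $\b(G) \cong \b(H_1) \times \b(H_2)$ and the computation $(p \cdot S^m) * (q \cdot S^n) \simeq pq \cdot S^{m+n+1}$), which is fine.
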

\begin{proof}
Since $H_1$ and $H_2$ are disjoint, every element of $\b(H_1)$ commutes with every element of $\b(H_2)$.  Thus, the complex $\Delta(G)$ is formed by taking the simplicial join of the complexes $\Delta(H_1)$ and $\Delta(H_2)$, and hence the geometric realization $|\Delta(G)|$ is the topological join of $|\Delta(H_1)|$ and $|\Delta(H_2)|$ (see \cite{hatcher}). The last claim now follows from Theorem~\ref{thm:main result}.
\end{proof}

The homotopy types of the boolean complexes associated to the finite and affine irreducible Coxeter systems can be calculated as a corollary to Theorem \ref{thm:main result}.

\begin{cor}
The homotopy types of the boolean complexes for the finite and affine irreducible Coxeter systems are listed below, where $f(n)$ is the sequence of Fibonacci numbers (with $f(1) = f(2) =1$) and $c(n)$ is the sequence defined by $c(1) = 1$ and $c(n) = c(n-1) + f(n) + f(n-2)$.\\
\begin{center}
$\begin{array}{c|l}
\text{Coxeter} & \text{Homotopy type}\\
\text{group $W$} & \text{of $|\Delta(W,S)|$}\\
\hline
\rule[0mm]{0mm}{4mm}
A_n & f(n-1) \cdot S^{n-1}\\
\rule[0mm]{0mm}{4mm}
B_n & f(n-1) \cdot S^{n-1}\\
\rule[0mm]{0mm}{4mm}
D_n & f(n-2) \cdot S^{n-1}\\
\rule[0mm]{0mm}{4mm}
E_6 & 4 \cdot S^5\\
\rule[0mm]{0mm}{4mm}
E_7 & 6 \cdot S^6\\
\rule[0mm]{0mm}{4mm}
E_8 & 10 \cdot S^7\\
\rule[0mm]{0mm}{4mm}
F_4 & 2 \cdot S^3\\
\rule[0mm]{0mm}{4mm}
G_2 & S^1\\
\rule[0mm]{0mm}{4mm}
H_3 & S^2\\
\rule[0mm]{0mm}{4mm}
H_4 & 2 \cdot S^3\\
\rule[0mm]{0mm}{4mm}
I_2(m) & S^1
\end{array}$
\hspace*{.2in}
$\begin{array}{c|l}
\text{Coxeter} & \text{Homotopy type}\\
\text{group $W$} & \text{of $|\Delta(W,S)|$}\\
\hline
\rule[0mm]{0mm}{4mm}
\widetilde{A}_n & c(n) \cdot S^n\\
\rule[0mm]{0mm}{4mm}
\widetilde{B}_n & f(n-2) \cdot S^n\\
\rule[0mm]{0mm}{4mm}
\widetilde{C}_n & f(n-1) \cdot S^n\\
\rule[0mm]{0mm}{4mm}
\widetilde{D}_n & f(n-3) \cdot S^n\\
\rule[0mm]{0mm}{4mm}
\widetilde{E}_6 & 7 \cdot S^6\\
\rule[0mm]{0mm}{4mm}
\widetilde{E}_7 & 9 \cdot S^7\\
\rule[0mm]{0mm}{4mm}
\widetilde{E}_8 & 16 \cdot S^8\\
\rule[0mm]{0mm}{4mm}
\widetilde{F}_4 & 3 \cdot S^4 \\
\rule[0mm]{0mm}{4mm}
\widetilde{G}_2 & S^2\\
\multicolumn{2}{c}{\rule[0mm]{0mm}{4mm}\ }\\
\multicolumn{2}{c}{\rule[0mm]{0mm}{4mm}\ }
\end{array}$
\end{center}
\ \\
\end{cor}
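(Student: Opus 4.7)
The strategy is to apply Theorem~\ref{thm:main result} directly: for any Coxeter system $(W,S)$ in the table, with unlabeled Coxeter graph $G$ on $|S|$ vertices, Theorem~\ref{thm:main result} gives
\[ |\Delta(W,S)| \simeq \bn(G) \cdot S^{|S|-1}, \]
so the sphere dimension in each row is forced by the vertex count, and the corollary reduces to computing $\bn(G)$ for each listed graph. This will be carried out using the recursion \eqref{eqn:m + edge} together with Proposition~\ref{prop:disjoint union}.

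The foundational computation is the path case. Writing $a_n := \bn(P_n)$ for the path on $n$ vertices and applying \eqref{eqn:m + edge} to an edge at one end of $P_n$, one has $P_n - e = \delta_1 \sqcup P_{n-1}$ (whose boolean number vanishes by Proposition~\ref{prop:disjoint union}), $P_n / e = P_{n-1}$, and $P_n - [e] = P_{n-2}$. This yields the Fibonacci recurrence $a_n = a_{n-1} + a_{n-2}$ with $a_1 = 0$ and $a_2 = 1$, hence $a_n = f(n-1)$. Since the unlabeled Coxeter graphs of $A_n$, $B_n$, $F_4$, $H_3$, $H_4$, $G_2$, and $I_2(m)$ are all paths, the corresponding rows of the finite table follow immediately.

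For the remaining finite types, each graph is a tree with a single trivalent vertex. For $D_n$, applying \eqref{eqn:m + edge} to one of the short-branch edges gives $D_n - e = \delta_1 \sqcup P_{n-1}$, $D_n/e = P_{n-1}$, and $D_n - [e] = \delta_1 \sqcup P_{n-3}$, whence $\bn(D_n) = a_{n-1} = f(n-2)$. The diagrams $E_6, E_7, E_8$ are handled analogously by cutting the short branch at the trivalent vertex; the extraction step produces a disjoint union of two shorter paths whose boolean numbers multiply via Proposition~\ref{prop:disjoint union}, while the deletion term contains an isolated vertex and contributes nothing, and the contraction term is again a path. Elementary arithmetic then delivers the tabulated values.

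The affine case $\widetilde{A}_n$ requires its own recursion. The graph is an $(n+1)$-cycle; for any edge $e$ one has $\widetilde{A}_n - e = P_{n+1}$, $\widetilde{A}_n/e = \widetilde{A}_{n-1}$, and $\widetilde{A}_n - [e] = P_{n-1}$, so $c(n) := \bn(\widetilde{A}_n)$ satisfies $c(n) = c(n-1) + f(n) + f(n-2)$ with base case $c(1) = \bn(A_2) = 1$, matching the recursion in the statement. The remaining affine diagrams $\widetilde{B}_n, \widetilde{C}_n, \widetilde{D}_n, \widetilde{E}_{6,7,8}, \widetilde{F}_4$, and $\widetilde{G}_2$ are all trees differing from a path or a $D$-type diagram by one additional leaf, so \eqref{eqn:m + edge} applied to that leaf edge reduces each case to previously computed values. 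The main obstacle is not conceptual but organizational: for each affine and exceptional diagram one must correctly identify the unlabeled underlying graph and choose an edge whose three reductions all land on already-handled configurations. Once this bookkeeping is done, the recursion collapses to straightforward Fibonacci identities.
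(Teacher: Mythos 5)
Your overall strategy is exactly the intended one: the corollary is meant to follow from Theorem~\ref{thm:main result} by running the recursion of equation~\eqref{eqn:m + edge} (with Proposition~\ref{prop:disjoint union}) on the unlabeled Coxeter graphs, and your explicit computations are correct as far as they go --- $\bn(P_m)=f(m-1)$ for paths, the reductions for $D_n$ and $E_6,E_7,E_8$, and the cycle recursion giving $c(n)=c(n-1)+f(n)+f(n-2)$ for $\widetilde{A}_n$ all check out and reproduce the corresponding rows.

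The genuine gap is precisely in the cases you dismiss as ``bookkeeping.'' If you actually carry out the recursion on the standard affine diagrams, which have $n+1$ vertices (as is forced by the sphere dimension $S^n$ in the table and by your own treatment of $\widetilde{A}_n$), you obtain $\bn(\widetilde{C}_n)=\bn(P_{n+1})=f(n)$, $\bn(\widetilde{B}_n)=f(n-1)$ (its unlabeled graph is the $D$-shaped tree on $n+1$ vertices), and $\bn(\widetilde{D}_n)=f(n-2)$ (contracting a fork edge yields the $D$-shape on $n$ vertices, while deletion and extraction each leave an isolated vertex and contribute $0$). These values do not agree with the tabulated $f(n-1)$, $f(n-2)$, $f(n-3)$, so your claim that the reductions ``collapse to straightforward Fibonacci identities'' matching the statement would fail for exactly these three rows; in fact no correct computation can reproduce the table as printed, since $\widetilde{C}_4$ and $\widetilde{F}_4$ have the same unlabeled Coxeter graph (a path on five vertices, with $\bn=f(4)=3$) yet receive different entries, and $\widetilde{B}_4$ has the same unlabeled graph as $D_5$, for which your own computation (and the finite table) gives $f(3)=2$ rather than $f(2)=1$. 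The discrepancy is hidden at the smallest admissible ranks ($\widetilde{B}_3$ is the star $S_4$ with $\bn=1$ by Corollary~\ref{cor:star}, and $\widetilde{D}_4$ is $S_5$), because $f(1)=f(2)=1$, which is presumably why it is easy to miss. A complete proof must perform these three reductions explicitly and then either correct the indexing in those rows or state and justify a nonstandard convention for the affine rank --- and the latter cannot be reconciled with the dimensions $S^n$ appearing in the same rows.
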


The sequence $c(n)$ is entry A014739 in \cite{oeis}.  It can be written in closed form as
\begin{equation*}
c_n = \left(\frac{1+\sqrt{5}}{2}\right)^{n+1}+\left(\frac{1-\sqrt{5}}{2}\right)^{n+1}-2.
\end{equation*}

\section{Discrete Morse theory} \label{section:morse}

The primary tool in the proof of Theorem \ref{thm:main result} is discrete Morse theory, which gives an expedient way to analyze the homotopy type of the geometric realization of a regular cell complex through combinatorial properties of its face poset. Discrete Morse theory is a rich subject, and in this section we present only those ideas and results necessary for our argument. The reader is encouraged to read \cite{forman-complexes, forman-user} for a detailed background.

\begin{defn}
Let $P$ be a ranked poset. A \emph{matching} on $P$ is a collection $M$ of pairs $(x,y)$ where $x \lessdot y$ is a covering relation in $P$, and each element of $P$ occurs in at most one pair in $M$.
If $(x,y) \in M$, then $x \lessdot y$ is a \emph{matched edge} in $M$. If $x \in P$ occurs in a matched edge in $M$, then $x$ is \emph{matched}. Otherwise, $x$ is \emph{unmatched}.  In the case of a simplicial poset $P$, we require that a matching leaves the minimal element unmatched.
\end{defn}

\begin{defn}
Let $P$ be a ranked poset, and let $M$ be a matching on $P$. Consider the Hasse diagram of $P$ as a directed graph, with an edge $v \rightarrow u$ if $v \gtrdot u$.  Reverse the direction of each edge in the Hasse diagram which corresponds to a matched edge in $M$.  Let $P(M)$ be the resulting directed graph.  The matching $M$ is \emph{acyclic} if there are no directed cycles in $P(M)$.  In the Hasse diagram for $P$, those edges whose directions have been changed will be said to point \emph{up}, while unchanged edges point \emph{down}.
\end{defn}

Discrete Morse theory allows one to reduce regular cell complexes without changing the homotopy type of their geometric realizations. Roughly speaking, when $X$ is a regular cell complex with face poset $P$, and $M$ is an acyclic matching on $P$, one can collapse cells along matched edges in $M$ without changing the homotopy type of $|X|$. We will apply this method to regular cell complexes associated to simplicial posets. 

The convention that the minimal element of a simplicial poset be left unmatched circumvents a minor technical issue, as the minimal element in a simplicial poset is represented by the empty cell in the associated cell complex and thus plays no role in the geometric realization. Indeed the minimal element will henceforth be ignored. This convention is also taken in \cite{forman-complexes, forman-user}, and it should bring to mind Remark~\ref{rem:rank -1}.

The particular result which we will use to analyze the boolean complex $\Delta(G)$ is stated below.

\begin{thm}[see \cite{forman-complexes, forman-user}] \label{thm:DiscreteMorse}
Let $P$ be a simplicial poset, and let $M$ be an acyclic matching on $P$. For each $r \geq 0$, let $u_r$ denote the number of elements of rank $r$ that are unmatched.  Then the geometric realization of the regular cell complex associated to $P$ is homotopy equivalent to a CW-complex with exactly $u_r$ cells of dimension $r$ for each $r \geq 0$.
\end{thm}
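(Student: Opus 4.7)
The plan is to convert the acyclic matching $M$ into a discrete Morse function $f : P \to \mathbb{R}$ whose critical cells are exactly the unmatched elements of $P$, and then apply the standard sublevel-set argument of discrete Morse theory.

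To construct $f$, consider the directed graph obtained from $P(M)$ by first reversing all edges and then contracting each matched pair $(x, y)$ into a single super-vertex. The reverse of $P(M)$ is acyclic since $P(M)$ is, and the contracted graph remains acyclic: a directed cycle in the contraction would lift to a closed walk in $P(M)$ alternating between matched and unmatched edges (entering and leaving each matched super-vertex through its matched edge), forming a directed cycle in $P(M)$ itself and contradicting acyclicity of $M$. Topologically sort this contracted acyclic digraph. Let $f(\sigma)$ denote the position of $\sigma$ in the resulting order, adjusted so that $f(x) = f(y)$ on every matched pair. By construction, every unmatched covering $\tau \gtrdot \sigma$ satisfies $f(\tau) > f(\sigma)$, while every matched pair satisfies $f(x) = f(y)$. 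One then checks directly that $f$ is a discrete Morse function in Forman's sense: for any rank-$r$ cell $\sigma$, the only coface $\tau \gtrdot \sigma$ possibly satisfying $f(\tau) \leq f(\sigma)$ is the matched partner (if any), and symmetrically for faces. Hence the critical cells coincide precisely with the unmatched elements of $P$.

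With $f$ in hand, filter $|X|$ by the sublevel sets $X_c = \bigcup_{f(\sigma) \leq c} \overline{\sigma}$. As $c$ crosses the value associated to an unmatched critical cell $\sigma$ of rank $r$, the subcomplex acquires a single new $r$-cell, contributing one $r$-cell to the final CW-complex. As $c$ crosses the common value of a matched pair $(x, y)$ with $x \lessdot y$, both $x$ and $y$ enter simultaneously; the Morse property guarantees that every other coface $z \gtrdot x$ satisfies $f(z) > f(x) = c$, so $z$ is not yet present, making $x$ a free face of $y$ in the updated subcomplex. The elementary collapse of $(x, y)$ is then a deformation retraction onto the previous sublevel set, preserving homotopy type. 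Iterating across all sublevel values yields a CW-complex with exactly $u_r$ cells of dimension $r$ for each $r \geq 0$, homotopy equivalent to $|X|$.

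The main obstacle is the contraction step above: verifying that collapsing matched pairs in the reverse of $P(M)$ preserves acyclicity. This is essentially the combinatorial heart of discrete Morse theory, and it captures precisely why the acyclicity of $M$ matters — without it, no consistent Morse function exists. The convention that the minimal element of $P$ is left unmatched is harmless throughout, since that element represents the empty cell and is omitted from the geometric realization $|X|$.
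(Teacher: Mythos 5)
The paper offers no proof of this theorem---it is quoted from Forman's papers---and your argument follows the same standard route as that source: convert the acyclic matching into a discrete Morse function, then run the sublevel-set analysis. The sublevel portion of your sketch is sound: with your ``flat'' function, values weakly decrease when passing to faces, so each sublevel set is a subcomplex; distinct super-vertices receive distinct values, so at each critical value either a single unmatched $r$-cell is attached along a boundary already present, or a matched pair $(x,y)$ enters with $x$ a free face of $y$, giving an elementary collapse. (Strictly, one also needs the routine fact that attaching a cell is compatible with homotopy equivalences of the base, as in Forman, but that is standard.)

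The one place where you assert rather than prove is exactly the step you flag as the main obstacle: acyclicity of the contracted digraph. The parenthetical claim that a directed cycle in the contraction ``enters and leaves each matched super-vertex through its matched edge'' is the entire content of that step; a priori a cycle could enter a super-vertex at the top element $y$ and leave from the bottom element $x$, and such a cycle does not lift to a directed cycle in $P(M)$, so your lifting argument as written begs the question. The gap is genuine but easily closed by a rank count in the spirit of Lemma~\ref{lem:TwoRanks}: after contraction (and discarding the loops coming from matched edges) the surviving arcs are precisely the unmatched covering relations, each of which drops rank by $1$, while a pass through a super-vertex changes rank by $+1$ (enter at $x$, leave at $y$), by $0$, or by $-1$ (enter at $y$, leave at $x$). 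In a closed directed walk the rank changes sum to zero and the number of vertex-visits equals the number of arcs, so every visit must be a $+1$ pass through a super-vertex; inserting the matched arcs $x \lessdot y$ at those passes then produces a genuine directed cycle in $P(M)$, contradicting the acyclicity of $M$. Alternatively, you can bypass the contraction altogether: topologically sort $P(M)$ itself and take $f$ strictly decreasing along all of its arcs; this is already a discrete Morse function whose critical cells are exactly the unmatched elements, at the cost of a slightly more careful level-subcomplex argument, which is the form of the proof in the cited references.
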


Theorem~\ref{thm:main result} will be proved inductively by constructing an acyclic matching of the simplicial poset $\b(G)$ with all elements of non-negative rank matched except for one element of rank 0 and some number, which we will denote by $\bn(G)$, of maximal elements.  There are two important points to make about such a matching, summarized in the following remark.

\begin{rem} \label{rem:m well defined}
If $M$ is an acyclic matching on a simplicial poset $P$, and the only elements on non-negative rank unmatched by $M$ are one element of rank $0$ and $\bn$ elements of rank $r$, then the geometric realization of the regular cell complex associated to $P$ is homotopy equivalent to $\bn \cdot S^r$. Also, note that the number $\bn$ is determined by the homotopy type of the cell complex $\Delta$ with face poset $P$, and is therefore independent of the matching $M$. In fact, the number $\bn$ is determined by the Euler characteristic of $\Delta$ and the formula 
\[\chi(\Delta) = \chi(\bn \cdot S^r) = 1 + \bn \cdot (-1)^r.\]
\end{rem}

The following lemma is useful for proving the acyclicity of matchings.

\begin{lem} \label{lem:TwoRanks}
Consider a matching $M$ on a ranked poset $P$.  If $M$ has a cycle, then the elements in the cycle lie in two adjacent ranks of $P$.
\end{lem}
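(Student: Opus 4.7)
The plan is to track how the rank of vertices changes as we traverse a directed cycle in $P(M)$. Each directed edge of $P(M)$ is of exactly one of two types: an unmatched covering relation $y \to x$ with $y \gtrdot x$ (a \emph{down} edge, which decreases rank by one), or a matched covering relation $x \to y$ with $(x,y) \in M$ (an \emph{up} edge, which increases rank by one). Since any directed cycle must return to its starting vertex, the total rank change around the cycle is zero, forcing the number of up edges to equal the number of down edges; call this common number $u$, so the cycle has length $2u$.

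The crux of the argument is the observation that two up edges cannot appear consecutively in a directed path in $P(M)$. Indeed, suppose $x \to y$ is an up edge, so $(x,y) \in M$. Because a matching assigns to each element at most one matched pair, $y$ cannot be matched with any element above it, and so there is no $z \gtrdot y$ with $(y,z) \in M$. Consequently every outgoing edge from $y$ in $P(M)$ must be a down edge, which rules out an immediately following up edge.

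Combining these two observations finishes the proof. The $u$ up edges partition the cyclic sequence of edges into $u$ nonempty arcs of consecutive down edges; since the total number of down edges is also $u$, each arc must consist of a single down edge. Therefore up and down edges strictly alternate around the cycle, which means the ranks of its vertices alternate between two adjacent values $r$ and $r+1$. I expect essentially no obstacle here: the only nontrivial content of the lemma is the forbidden-consecutive-up-edges observation, and the rest is just counting; I would only need to be careful to phrase the partition-into-arcs argument in a way that handles the cyclic (rather than linear) nature of the sequence correctly.
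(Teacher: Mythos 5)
Your proof is correct and rests on the same key observation as the paper's: the matching condition forbids two consecutive upward edges in $P(M)$. The paper wraps this up by noting directly that a walk in the cycle can therefore never rise more than one rank above any of its elements, whereas you conclude via the up/down edge count that the cycle strictly alternates; this is only a cosmetic difference in bookkeeping.
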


\begin{proof}
Because $M$ is a matching, there cannot be two incident upward pointing edges in $P(M)$.  Thus, after moving upward, one must move downward at least once before moving upward again.  So if $x$ is an element in a directed cycle, then no element in the cycle can be more than one rank higher than $x$ in $P$.
\end{proof}

\section{Proof of Theorem~\ref{thm:main result}}\label{section:proof}

In this section we prove Theorem~\ref{thm:main result} by constructing, for each nonempty finite simple graph $G$, an acyclic matching of $\b(G)$ where the only unmatched elements of non-negative rank are one element of rank $0$ and some number of maximal elements. As noted in Remark \ref{rem:m well defined}, this implies that $|\Delta(G)|$ is homotopy equivalent to a wedge of spheres of maximal dimension. Furthermore, the number of such spheres, the boolean number $\bn(G)$, is determined by the Euler characteristic of $\b(G)$, prompting the next definition. 

\begin{defn}
For a nonempty finite simple graph $G$ on $n$ vertices, set
 \[ \bn(G) = (-1)^{n-1} (\chi(\Delta(G))-1). \]
\end{defn}

Although this definition gives a way to calculate the value $\bn(G)$ in Theorem~\ref{thm:main result}, computing the Euler characteristic requires knowing a significant amount about the structure of the poset $\b(G)$, as opposed to the recursive formula in equation~\eqref{eqn:m + edge} which requires only basic graph operations.  

We construct the matchings by induction on the number of edges in $G$. The inductive step is somewhat complicated, and we will in fact produce matchings with more specific properties than are actually needed for the desired conclusion. The inductive hypothesis is stated below, after introducing the following notation.

\begin{defn} \label{defn:include v} For a nonempty finite simple graph $G$ and a vertex $s$ in $G$, let $\b(G)_s \subset \b(G)$ be the subposet of elements containing the letter $s$.
\end{defn}

The goal of this section is to show inductively that the following statement holds for every integer $\e \geq 0$.

\begin{induct}[$\H(\e) $] \label{ind:He}
For every nonempty graph $G$ with at most $\e$ edges, and for every vertex $s$ in $G$, there exists an acyclic matching $M$ on the poset $\b(G)$ with the following properties:
\begin{enumerate}
\item[\h{1}] The only unmatched elements in $\b(G)$ of non-negative rank are one element of rank 0 and $\bn(G)$ maximal elements;
\item[\h{2}] If $G \setminus s$ is nonempty, then, in the restriction of $M$ to the subposet $\b(G)_s$, the only unmatched elements are $\bn(G) + \bn(G \setminus s)$ maximal elements; and 
\item[\h{3}] If $\sigma \lessdot \tau$ is a matched edge in $M$, and $\tau$ contains $s$, then there exist word representatives for $\tau$ and $\sigma$ such that one of the following conditions holds:
   \begin{enumerate} 
   \item $\tau = \sigma s$, or 
   \item $\sigma$ is obtained from $\tau$ by deleting a letter appearing to the left of $s$.
   \end{enumerate}
\end{enumerate}
\end{induct}

We refer to a matching with properties $\h{1}$--$\h{3}$ as an \emph{$\H$-matching of $\b(G)$ at $s$},
or just as an \emph{$\H$-matching}. Notice that the condition $(G \setminus s) \neq \emptyset$ in $\h{2}$
implies that $s$ is not maximal in $\b(G)_s$ and must therefore be matched. When $G$ is a graph with a
single vertex $s$, an $\H$-matching of $\b(G)$ is obtained trivially. Property $\h{2}$ can be extended to
this trivial $\H$-matching if we formally define $\bn(\emptyset) = 1$.

For a nonempty finite simple graph $G$, the maximal elements of $\b(G)$ are products of $|G|$ letters.  Therefore, if $G$ has an $\H$-matching, then Theorem \ref{thm:DiscreteMorse} and $\h{1}$ imply that $\Delta(G) \simeq \bn(G) \cdot S^{|G|-1}$. Property $\h{2}$ is needed to preserve $\h{1}$ for the inductive step, and also to prove the recursive formula in Theorem \ref{thm:main result}. Property $\h{3}$ is needed purely for the purposes of the induction, specifically, to prove acyclicity of the constructed matching.

The next lemma establishes the base case of the induction.

\begin{lem}[Base case] \label{lem:BaseCase}
$\H(0)$ holds, and $\bn(\delta_n) = 0$ for all $n \geq 1$.
\end{lem}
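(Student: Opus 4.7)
Both assertions concern the same family of graphs, namely the discrete graphs $\delta_n$ for $n \geq 1$, since these are precisely the non-empty graphs with at most $0$ edges. My plan is first to identify the poset $\b(\delta_n)$ explicitly, compute $\bn(\delta_n)$ from the Euler characteristic, and then exhibit an acyclic $\H$-matching of $\b(\delta_n)$ at any prescribed vertex $s$, verifying the three properties $\h{1}$, $\h{2}$, $\h{3}$.

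Since $\delta_n$ has no edges, every pair of generators commutes, so each equivalence class of words is determined by its underlying set of letters. Thus $\b(\delta_n)$ is isomorphic to the boolean lattice $2^{[n]}$, and $\Delta(\delta_n)$ is the closed $(n{-}1)$-simplex. This space is contractible, so $\chi(\Delta(\delta_n)) = 1$, and the definition of $\bn$ at the start of Section~\ref{section:proof} then yields $\bn(\delta_n) = (-1)^{n-1}(1-1) = 0$, which is the second assertion of the lemma.

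For the matching, when $n = 1$ the only element of non-negative rank is $\{s\}$, which is unmatched, and properties $\h{2}$ and $\h{3}$ are vacuous. When $n \geq 2$ I would pick any vertex $t \neq s$ and let $M$ pair each subset $\sigma \in \b(\delta_n)$ other than $\emptyset$ and $\{t\}$ with its symmetric difference $\sigma \,\triangle\, \{t\}$; equivalently, $M$ toggles the letter $t$, leaving only $\{t\}$ unmatched among elements of non-negative rank. Property $\h{1}$ is then immediate, consistent with $\bn(\delta_n)=0$ because the top subset $[n]$ is paired with $[n]\setminus\{t\}$. For $\h{2}$, the condition $s \neq t$ gives $s \in \sigma \iff s \in \sigma \,\triangle\, \{t\}$, so each matched pair lies entirely inside or entirely outside $\b(\delta_n)_s$; both exceptional unmatched elements $\emptyset$ and $\{t\}$ lie outside, so every element of $\b(\delta_n)_s$ is matched, in agreement with $\bn(\delta_n)+\bn(\delta_n \setminus s)=0$. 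For $\h{3}$, free commutativity in $\delta_n$ lets me choose a word representative of $\tau$ in which $t$ appears immediately to the left of $s$, so deleting $t$ realizes case (b). Acyclicity follows from Lemma~\ref{lem:TwoRanks}: a cycle must alternate up and down within two adjacent ranks, but an up-edge adds $t$, and the subsequent down-edge must remove a letter other than $t$; one thus arrives at an element that still contains $t$ and hence has no outgoing up-edge, preventing the cycle from closing.

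The principal subtlety is the choice of matching vertex $t$ distinct from $s$. Toggling $s$ itself would be the more obvious default, but then every matched pair would straddle $\b(\delta_n)_s$ and its complement, causing $\h{2}$ to fail completely. The option $t \neq s$ is available exactly when $G \setminus s$ is non-empty, which is precisely the situation in which $\h{2}$ has non-trivial content; this alignment is what makes the base case fit cleanly into the inductive framework.
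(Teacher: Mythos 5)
Your proof is correct and follows essentially the same route as the paper: the paper's matching toggles the fixed letter $1$ after taking $s=n$ without loss of generality, which is exactly your ``toggle a vertex $t\neq s$'' matching, and your acyclicity and $\h{1}$--$\h{3}$ verifications mirror the paper's. The only cosmetic difference is that you obtain $\bn(\delta_n)=0$ from contractibility of the $(n-1)$-simplex via the Euler-characteristic definition, while the paper reads it off directly from the unmatched elements of the matching.
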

\begin{proof}
We show that for each vertex $s \in \delta_n$, there exists an $\H$-matching at $s$.
Let the vertices of $\delta_n$ be labeled $1, 2, \ldots, n$.  Without loss of generality, suppose that the vertex $s$ is $n$.  Let $M$ be the matching consisting of all covering relations of the form $\sigma \lessdot 1\sigma$ for $\sigma \subseteq \{2, \ldots, n\}$.  First notice that the only unmatched element of non-negative rank in $\b(\delta_n)$ is the vertex $1$, so $M$ has property $\h{1}$, with $\bn(\delta_n) = 0$.  Secondly, there are no unmatched elements in $\b(\delta_n)_s$, unless $n = 1$, proving $\h{2}$.  Property $\h{3}$ follows from the fact that the Coxeter group with graph $\delta_n$ is commutative, so the letters in $\sigma$ and $\tau$ can be permuted at will.

It remains to show that $M$ is acyclic.  This is straightforward, due to the fact that all the matched edges represent adding or removing the letter $1$ from an element in the poset.  Recall from Lemma \ref{lem:TwoRanks} that a cycle is contained in two adjacent ranks of $\b(\delta_n)$, so an ``up'' edge $\sigma \lessdot 1\sigma$ would have to be followed by a ``down'' edge $1\sigma \gtrdot \tau$. The next step must be an up edge $\tau \lessdot 1\tau$, so $\tau$ cannot contain $1$.  However, this implies that $\sigma = \tau$, contradicting the directions of the edges.  Therefore $M$ is acyclic.
\end{proof}

The remainder of the section is devoted to proving the inductive step. Henceforth we assume that $\H(\e)$ holds for some $\e \geq 0$ and consider a graph $G$ with $\e+1$ edges. We show that for an arbitrary vertex $s \in G$, there exists an $\H$-matching of $\b(G)$ at $s$. If $s$ is the endpoint of an edge $e$, then we construct in Lemma~\ref{lem:Edge} the $\H$-matching of $\b(G)$ from $\H$-matchings of $\b(G-e)$ and $\b(G/e)$, which exist by the induction hypothesis. The case when $s$ is an isolated vertex is treated in Lemma \ref{lem:vIsolated} by constructing the required matching from an $\H$-matching of $\b(G\setminus s)$.

\subsection{Considering a non-isolated vertex}\ 

Suppose that $s$ is the endpoint of an edge $e = \{s,t\}$, and set $H := G -e$. By the induction hypothesis, there exists an $\H$-matching $M_H$ of $\b(H)$ at $s$. We want to use this matching to produce an $\H$-matching of $\b(G)$ at $s$. To this end, we first compare the complexes $\b(G)$ and $\b(H)$. The complex $\b(H)$ is obtained from $\b(G)$ by identifying all elements that can be written as $\alpha st \gamma$ with the elements represented by $\alpha ts \gamma$, respectively.  Consequently, there is a canonical projection of posets
\[ \pi \colon \b(G) \twoheadrightarrow \b(H). \]
Now, let $\b(G)_{e} \subset \b(G)$ be the subset of elements that can be written in the form $\alpha st \gamma$; that is, $\b(G)_{e}$ consists of those elements in which both $s$ and $t$ appear, and where it is possible to write $s$ immediately to the left of $t$.  Let $\b(G)_{\widetilde{e}}$ be the set complement of $\b(G)_{e}$ in $\b(G)$. This gives a decomposition of sets
\begin{equation} \label{eqn:BG break down}
 \b(G) =  \b(G)_{\widetilde{e}} \sqcup \b(G)_{e},
\end{equation}
\noindent where $\sqcup$ denotes the disjoint union.  However, this is not a decomposition of posets, as there are covering relations between $\b(G)_{\widetilde{e}}$ and $\b(G)_{e}$, as recorded in the following lemma.  The proof of the lemma is not difficult, and is left to the reader.

\begin{lem} \label{lem:BadEdges}
\ \begin{enumerate}
\item If $\sigma \lessdot \tau$ is a covering relation in $\b(G)$ such that $\tau \in \b(G)_{e}$ and $\sigma \in \b(G)_{\widetilde{e}}$, then $\sigma$ is obtained from $\tau$ by deleting either $s$ or $t$.
\item If $\sigma \lessdot \tau$ is a covering relation in $\b(G)$ such that $\tau \in \b(G)_{\widetilde{e}}$ and $\sigma \in \b(G)_{e}$, then $\tau$ can be written as $\alpha s w_1 w_2 \cdots w_r t \gamma$, where $\sigma$ is obtained from $\tau$ by removing a letter $w_i$ between $s$ and $t$, and the vertices $s, w_1, w_2, \ldots, w_r, t$ form a path in the graph $H$.
\end{enumerate}
\end{lem}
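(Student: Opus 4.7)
The plan is to examine both parts by tracking the unique letter $x \in \tau \setminus \sigma$ within appropriate word representatives of $\sigma$ and $\tau$, leveraging the hypothesis on which of the two elements admits a representative in which $s$ is immediately followed by $t$.

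For part (1), I would begin with a representative $\alpha s t \gamma$ of $\tau$, which exists because $\tau \in \b(G)_{e}$. The letter $x$ appears exactly once in this word. If $x$ were distinct from both $s$ and $t$, then $x$ would lie in $\alpha$ or $\gamma$, so deleting $x$ would yield a representative of $\sigma$ in which $s$ is still immediately followed by $t$. This would force $\sigma \in \b(G)_{e}$, contradicting the hypothesis. Hence $x = s$ or $x = t$, which is the required conclusion.

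For part (2), I would take a representative $\alpha' s t \gamma'$ of $\sigma$, which exists since $\sigma \in \b(G)_{e}$. The covering relation $\sigma \lessdot \tau$ produces a representative of $\tau$ obtained from some representative of $\sigma$ by inserting $x$ at a single position; after transporting the commutation sequence identifying the chosen representative of $\sigma$ with the one arising from the covering relation, we may arrange that the non-$x$ letters appear in the pattern $\alpha' s t \gamma'$, with $x$ slotted in at one position. If $x$ were inserted inside $\alpha'$ or $\gamma'$, the resulting representative of $\tau$ would still have $s$ immediately followed by $t$, forcing $\tau \in \b(G)_{e}$ and contradicting the hypothesis. The only remaining possibility is that $x$ is inserted between $s$ and $t$, yielding the representative $\alpha' s x t \gamma'$, which realizes the form $\alpha s w_1 \cdots w_r t \gamma$ with $r = 1$ and $w_1 = x$. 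The path requirement then follows because $\tau \in \b(G)_{\widetilde{e}}$ rules out commuting $x$ past either endpoint: if $\{s, x\}$ (respectively $\{x, t\}$) were not an edge of $G$, then $x$ could be swapped with $s$ (respectively $t$), producing yet another representative of $\tau$ with $s$ immediately followed by $t$. Since $x$ is distinct from $s$ and $t$, both edges survive in $H = G - e$, so $s, x, t$ is a path in $H$.

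The main subtlety I anticipate is the justification in part (2) that $\tau$ admits a representative obtained by inserting $x$ into the specific chosen representative $\alpha' s t \gamma'$ of $\sigma$. This amounts to transporting the commutation equivalence between representatives of $\sigma$ up to representatives of $\tau$, which in general requires first commuting $x$ out of the way whenever it blocks an adjacent pair of letters needing to be swapped. In situations where $x$ is genuinely trapped between letters that it does not commute with, the more general form $\alpha s w_1 \cdots w_r t \gamma$ with $r \geq 1$ that the lemma explicitly permits accommodates the configuration, with the extra $w_j$'s being the letters that cannot be freed in $\tau$'s representative but which can be commuted past $s$ or $t$ once $x$ is removed to produce $\sigma$.
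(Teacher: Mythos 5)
The paper itself gives no argument for this lemma (it is ``left to the reader''), so your proposal must stand on its own. Part (1) is essentially correct: the only point you use silently is that deleting the letter $x=\tau\setminus\sigma$ from \emph{any} word representative of $\tau$ yields a representative of $\sigma$ (deletion of a letter is well defined on commutation classes); this is easy and worth one sentence, but it is not a gap.

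Part (2), however, has a genuine gap: the ``transport'' step, where you claim that some representative of $\tau$ is obtained by inserting $x$ into the chosen representative $\alpha' s t\gamma'$ of $\sigma$, is false in general, and your main argument (which then forces $r=1$ and $\tau=\alpha' s x t\gamma'$) only covers a special case. Concretely, let $G$ be the $4$-cycle on $s,x,w,t$ with edges $\{s,t\}=e$, $\{s,x\}$, $\{x,w\}$, $\{w,t\}$. Then $\tau=[sxwt]$ has the single word representative $sxwt$, so $\tau\in\b(G)_{\widetilde{e}}$, while $\sigma=[swt]=[wst]\in\b(G)_{e}$ and $\sigma\lessdot\tau$. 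Your chosen representative of $\sigma$ is $wst$, but no representative of $\tau$ arises by inserting $x$ into $wst$ (the words $xwst, wxst, wsxt, wstx$ all lie in a different commutation class), so the non-$x$ letters of $\tau$ cannot be arranged in the pattern $\alpha' s t\gamma'$, and the conclusion $\tau=\alpha' s x t\gamma'$ is simply wrong here; the lemma's conclusion holds only in the form $\tau=s\,x\,w\,t$ with $r=2$. Your closing paragraph acknowledges exactly this possibility but does not repair it: in the case where letters other than $x$ are unavoidably trapped between $s$ and $t$, you never establish the stated form of $\tau$ nor the path condition, which is the entire content of part (2). A workable route is to argue with the relative order forced by non-commuting pairs (the ``heap'' of $\tau$): since $\tau\in\b(G)_{\widetilde{e}}$ there is a letter $z$ with $s$ forced before $z$ and $z$ forced before $t$ via chains of $G$-edges, and since deleting $x$ puts $\sigma$ in $\b(G)_{e}$, any such chain of $H$-edges from $s$ to $t$ must pass through $x$; one then chooses a representative of $\tau$ in which precisely the letters forced between $s$ and $t$ occupy the positions between them. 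Be warned, though, that with the strictest reading (all letters between $s$ and $t$ in the chosen word forming a path) the statement needs care; what the argument above delivers, and what the later application via property $\h{3}$ actually uses, is an increasing chain of $H$-edges from $s$ through the removed letter, guaranteeing that the removed letter cannot be written to the left of $s$.
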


It is easy to see that $\pi$ restricts to a bijective order-preserving map 
 \[\pi_{\widetilde{e}} \colon \b(G)_{\widetilde{e}} \longrightarrow \b(H). \]
However this is not a bijection of posets, as $\b(H)$ can have more covering relations than $\b(G)_{\widetilde{e}}$, corresponding to elements of $\b(G)_{\widetilde{e}}$ covering elements of $\b(G)_{e}$.  The following lemma nevertheless allows us to pull back the $\H$-matching $M_H$ of $\b(H)$ at $s$ to a matching on $\b(G)_{\widetilde{e}}$ along the map $\pi_{\widetilde{e}}$. 

\begin{lem} \label{lem:NoBadEdgesInM}
If $\sigma$ and $\tau$ are elements in $\b(G)_{\widetilde{e}}$ such that $ \pi(\sigma) \lessdot \pi(\tau)$ is a matched edge in $M_H$, then $\sigma \lessdot \tau$ is a covering relation in $\b(G)_{\widetilde{e}}$.
\end{lem}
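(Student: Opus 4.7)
The plan is to reduce the lemma to proving that $\rho := \tau - x \in \b(G)$ (where $x$ is the unique letter in $\tau$ but not in $\sigma$) lies in $\b(G)_{\widetilde{e}}$. Indeed, $\rho \lessdot \tau$ holds in $\b(G)$ by construction and $\pi(\rho) = \pi(\sigma)$, so once we know $\rho \in \b(G)_{\widetilde{e}}$ the bijectivity of $\pi_{\widetilde{e}}$ forces $\rho = \sigma$, which is exactly what we need.

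First I would dispose of the cases in which $\rho$ trivially lies in $\b(G)_{\widetilde{e}}$: these are $x \in \{s, t\}$ and the case where $\tau$ itself lacks one of $s$ or $t$, since then $\rho$ fails to contain one of these letters. In the remaining situation $x \notin \{s, t\}$ and $\tau$ contains both $s$ and $t$. Since $\tau$ (hence $\pi(\tau)$) contains $s$, property $\h{3}$ of the matched edge $\pi(\sigma) \lessdot \pi(\tau)$ applies. Clause (a) would force $x = s$, already handled, so I would be in clause (b): there is a $\b(H)$-word representative $w$ of $\pi(\tau)$ in which $x$ occurs to the left of $s$.

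The heart of the argument is to derive a contradiction from the assumption $\rho \in \b(G)_e$. Fix a $\b(G)$-representative $u = \alpha s t \gamma$ of $\rho$ with $s$ and $t$ adjacent, and enumerate the fiber $\phi_x^{-1}(\rho) = \{\tau' : \tau' - x = \rho\}$ via insertions of $x$ into $u$. If $\{x, s\}$ is not a $G$-edge, inserting $x$ between $s$ and $t$ gives $\alpha s x t \gamma \sim_G \alpha x s t \gamma$, which again has $st$ adjacent; every other insertion slot preserves $st$-adjacency automatically. Thus every element of the fiber lies in $\b(G)_e$, contradicting $\tau \in \b(G)_{\widetilde{e}}$. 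The symmetric argument handles $\{x, t\}$ not being a $G$-edge. We are forced into the remaining configuration: both $\{x, s\}$ and $\{x, t\}$ are $G$-edges, and the unique element of $\phi_x^{-1}(\rho) \cap \b(G)_{\widetilde{e}}$ is $[\alpha s x t \gamma]_G$, so $\tau = [\alpha s x t \gamma]_G$.

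The main obstacle is refuting this final configuration, and this is precisely where the content of $\h{3}(b)$ is used. Because $\{x, s\}$ is a $G$-edge and hence also an $H$-edge, the relative order of $x$ and $s$ is preserved under every $\b(G)$-swap; and the only $\b(H)$-swaps outside $\b(G)$ are the $st$-swaps, which move $s$ by one position while leaving $x$ fixed and so preserve the ordering between $x$ and $s$ as well. In the representative $\alpha s x t \gamma$ the letter $x$ lies to the right of $s$, so every $\b(H)$-representative of $\pi(\tau)$ must place $x$ to the right of $s$, contradicting the representative $w$ furnished by $\h{3}(b)$. This establishes $\rho \in \b(G)_{\widetilde{e}}$ and completes the proof.
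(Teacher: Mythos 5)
Your opening reduction is sound and matches the paper's strategy: setting $\rho := \tau - x$, noting $\rho \lessdot \tau$ and $\pi(\rho)=\pi(\sigma)$, and using injectivity of $\pi_{\widetilde{e}}$ to reduce everything to showing $\rho \in \b(G)_{\widetilde{e}}$. The problem is the step where you ``enumerate the fiber $\{\tau' : \tau'-x=\rho\}$ via insertions of $x$ into $u$'' for one fixed representative $u=\alpha s t\gamma$ of $\rho$. That enumeration claim is false: an element of the fiber need not arise by inserting $x$ into the particular word $u$, because once $x$ is present it can block exactly the commutations needed to reach $u$. Concretely, let $G$ have vertices $s,t,a,x$ and edges $\{s,t\}=e$, $\{s,a\}$, $\{a,x\}$, $\{x,t\}$ (so $x$ commutes with $s$, and $a$ commutes with $t$). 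Take $\tau=[saxt]$, whose commutation class is the single word $saxt$, so $\tau\in\b(G)_{\widetilde{e}}$; deleting $x$ gives $\rho=[sat]=\{sat,sta\}\in\b(G)_e$, with $u=sta$. Here $\{x,s\}$ is not a $G$-edge, yet $\tau$ lies in the fiber and in $\b(G)_{\widetilde{e}}$, so your conclusion ``every element of the fiber lies in $\b(G)_e$'' fails, and with it the contradiction you draw in case (i) (the case $\{x,t\}\notin G$ is symmetric). The same unjustified enumeration underlies the claim that in the remaining configuration $\tau$ must equal $[\alpha s x t\gamma]$ with both $\{x,s\},\{x,t\}$ edges.

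The missing idea is that when $\rho\in\b(G)_e$ and $\tau\in\b(G)_{\widetilde{e}}$ with $\rho\lessdot\tau$, the deleted letter $x$ need not be adjacent to $s$ or $t$; rather (Lemma~\ref{lem:BadEdges}(2)) $\tau$ can be written $\alpha s w_1\cdots w_r t\gamma$ with $s,w_1,\dots,w_r,t$ a path in $H$ and $x=w_i$ (in my example, the path $s,a,x,t$ with $r=2$). These cases cannot be dismissed by pure fiber combinatorics; they must be killed by $\h{3}$, exactly as in the paper: since consecutive letters along the path do not commute in $H$, every $H$-word representative of $\pi(\tau)$ keeps $s,w_1,\dots,w_r,t$ in this relative order, so $x$ is always strictly to the right of $s$ and is not $s$, contradicting both clauses of $\h{3}$ for the matched edge $\pi(\sigma)\lessdot\pi(\tau)$. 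Your final-configuration argument (relative order of a non-commuting pair is preserved by all $H$-swaps) is precisely the $r=1$ instance of this; it needs to be run for the general path, after invoking Lemma~\ref{lem:BadEdges}(2), rather than only for $x$ adjacent to both $s$ and $t$.
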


\begin{proof}
By Lemma \ref{lem:BadEdges}, if $\pi(\sigma) \lessdot \pi(\tau)$ is a covering relation in $\b(H)$ but $\sigma \lessdot \tau$ is not a covering relation in $\b(G)_{\widetilde{e}}$, then $\tau$ can be written in the form $\alpha s w_1 w_2 \cdots w_r t \gamma$, where the vertices $s, w_1, w_2, \ldots, w_r, t$ form a path in the graph $H$, and $\sigma$ is obtained by deleting a letter $w_i$ between $s$ and $t$. Property $\h{3}$ prohibits the covering relation $\pi(\sigma) \lessdot \pi(\tau)$ from being a matched edge in this case.
\end{proof}

Lemma~\ref{lem:NoBadEdgesInM} allows us to define a matching $M_{\widetilde{e}}$ on $\b(G)_{\widetilde{e}}$ by declaring $\sigma \lessdot \tau$ to be a matched edge in $M_{\widetilde{e}}$ if and only if $\pi(\sigma) \lessdot \pi(\tau)$ is a matched edge in $M_H$.  Property $\h{1}$ for $M_H$ implies that the elements of non-negative rank in $\b(G)_{\widetilde{e}}$ that are unmatched in $M_{\widetilde{e}}$ are exactly one element of rank $0$ and $\bn(H)$ maximal elements.  This matching $M_{\widetilde{e}}$ constitutes a part of our $\H$-matching of $\b(G)$ at $s$, and what remains is to produce matched edges among the elements in $\b(G)_{e}$.

Let $F$ denote the graph $G / e$ with $x \in F$ denoting the new vertex representing this edge. Then $F$ has at most $\e$ edges, so there is an $\H$-matching $M_F$ of $\b(F)$ at $x$ by the induction hypothesis. The following lemma allows us to pull the restriction of $M_F$ to $\b(F)_x$ back to a matching on $\b(G)_{e}$.

\begin{lem} There is an isomorphism of posets 
\[\phi \colon \b(G)_{e} \to \b(F)_x \]
defined by 
\[ \phi(\alpha s t \gamma) = \alpha x \gamma. \]
\end{lem}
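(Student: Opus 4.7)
The plan is to construct the inverse map $\psi\colon \b(F)_x \to \b(G)_{e}$ by $\psi([\alpha x \gamma]) = [\alpha s t \gamma]$, verify that $\phi$ and $\psi$ are well defined, observe that they are mutual inverses by direct inspection on representatives, and check that $\phi$ preserves the subword order. The principal tool is the standard trace-theoretic characterization of the equivalence relation on $\W(S)$: two strings of distinct letters represent the same element of $\b(G)$ if and only if they have the same letter set and, for every edge $\{a,b\}$ of $G$ such that both letters appear, the relative order of $a$ and $b$ agrees in the two strings. The analogous characterization of the subword order says $\sigma \le \tau$ in $\b(G)$ iff the letter set of $\sigma$ is contained in that of $\tau$ and, for every $G$-edge $\{a,b\}$ present in $\sigma$, the relative order of $a$ and $b$ in $\sigma$ agrees with that in $\tau$.

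The key step is well-definedness of $\phi$; the well-definedness of $\psi$ is obtained by running the argument in reverse. The edges of $F = G/e$ split into two types: (i) edges $\{a,b\}$ with $a,b \notin \{s,t\}$, which coincide with $G$-edges; and (ii) edges $\{x,y\}$, which arise precisely when $\{s,y\}$ or $\{t,y\}$ is a $G$-edge. Suppose $[\alpha_1 s t \gamma_1] = [\alpha_2 s t \gamma_2]$ in $\b(G)_{e}$; we check the $\b(F)$-equivalence invariant on every $F$-edge. For a type-(i) edge $\{a,b\}$, replacing the block $s t$ by the single letter $x$ does not disturb the position of any other letter, so the $\b(G)$-equivalence immediately gives matching orders for $a$ and $b$ in $\alpha_1 x \gamma_1$ and $\alpha_2 x \gamma_2$. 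For a type-(ii) edge $\{x,y\}$, the letter $y$ precedes $x$ in $\alpha_i x \gamma_i$ exactly when $y \in \alpha_i$, which is equivalent to $y$ preceding the block $s t$ in $\alpha_i s t \gamma_i$; since $y$ is $G$-adjacent to at least one of $s, t$, this predicate is invariant under $\sim$ in $\b(G)$, and therefore agrees for $i = 1, 2$.

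Once well-definedness is established, $\phi \circ \psi$ and $\psi \circ \phi$ act as the identity on representatives, so $\phi$ is a bijection. Order preservation then follows from the same type-(i)/type-(ii) split: if $\sigma \le \tau$ in $\b(G)_{e}$, the characterizing conditions (letter-set inclusion plus matching of $G$-adjacent pairwise orders) translate, edge by edge, into the characterizing conditions for $\phi(\sigma) \le \phi(\tau)$ in $\b(F)_x$, using that both letter sets contain $x$ and that $\{s,t\}$ is itself a $G$-edge which forces $s$ to precede $t$ in every relevant representative.

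The main obstacle is the type-(ii) case: a letter $y$ that is $G$-adjacent to only one of $s, t$ may freely commute past the other within $\b(G)$, so the statement ``$y$ lies to the left of the block $s t$'' is not locally stable under an individual elementary commutation. The crucial observation is that $G$-adjacency of $y$ to at least one of $s, t$ is enough to pin down, invariantly over the whole equivalence class, which side of the block $y$ occupies — precisely the data needed to locate $y$ relative to $x$ after contraction. Invoking the trace characterization of $\sim$ reduces the argument to this single invariant and avoids having to track chains of elementary commutations that temporarily separate $s$ from $t$ before restoring their adjacency.
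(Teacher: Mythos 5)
Your proposal is correct and follows the same route the paper sketches (the paper explicitly leaves the verification to the reader): construct the inverse $\psi$, check well-definedness and order-preservation, with the projection-invariant characterization of commutation classes supplying the details, and your key observation about letters $G$-adjacent to at least one of $s,t$ is exactly the point that makes well-definedness work. One small remark: since a bijective order-preserving map need not be a poset isomorphism (compare $\pi_{\widetilde{e}}$ in this same section), you should note explicitly that your edge-by-edge dictionary is a biconditional, i.e.\ that $\psi$ preserves order as well, which indeed follows from the same argument run in reverse.
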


The proof mainly consists of checking that $\phi$ is a well-defined, order-preserving map. The same reasoning then gives an inverse to $\phi$. This is left to the reader.

We now define a matching $M_{e}$ on $\b(G)_{e}$ by declaring $\sigma \lessdot \tau$ to be a matched edge in $M_{e}$ if and only if $\phi(\sigma) \lessdot \phi(\tau)$ is a matched edge in $M_F$. The following example illustrates this procedure for the graph considered in Example \ref{ex:a2}.

\begin{example} \label{ex:a2Matching}
Let $G$ be the graph $A_2$ with two vertices, $s$ and $t$, and one edge $e$ between them. Then $H = G -e$ is the graph with two disconnected vertices
$s$ and $t$, and $F = G/e$ is the graph on a single vertex $x$. The picture below shows a Hasse diagram for the poset $\b(G)$, excluding the empty
word, illustrating how $\b(G)$ decomposes into parts $\b(G)_{\widetilde{e}}$ and $\b(G)_{e}$, and how these parts can be related to $\b(H)$ and 
$\b(F)_x$, respectively.
\[
\xymatrix{
  {\overset{ts}{\bullet}} \ar@*{[|(5)]}@{-}[]+<0pt,-3.5pt>;[d]+<0pt,3pt> \ar@{-}[]+<0pt,-3.5pt>;[dr]+<0pt,3.5pt>& &   &  {\overset{ts}{\bullet}} \ar@*{[|(5)]}@{-}[]+<0pt,-3.5pt>;[d]+<0pt,3pt> \ar@{-}[]+<0pt,-3.5pt>;[dr]+<0pt,3.5pt> &  &      {\overset{st}{\bullet}}\ar@{-}[]+<0pt,-3.5pt>;[dll]+<0pt,3.5pt> \ar@{-}[]+<0pt,-3.5pt>;[dl]+<0pt,3.5pt> & &  {\overset{x}{\bullet}} \\
    {\underset{s}{\bullet}}& {\underset{t}{\bullet}}& & {\underset{s}{\bullet}}   & {\underset{t}{\bullet}} && \\
    && 
    \save "1,4"."2,5"="Box1"*[F-:<10pt>]\frm{} \restore
    \save "Box1"+<0.65cm,-2.2cm>*{\tiny{\b(G)_{\widetilde{e}}}} \restore
    \save "1,1"."2,2"="Box2"*[F-:<10pt>]\frm{} \restore
    \save "Box2"+<0.65cm,-2.2cm>*{\tiny{\b(H)}} \restore
    \save "1,6"."1,6"="Box3"*[F-:<10pt>]\frm{} \restore
    \save "Box3"+<0.25cm,-.7cm>*{\b(G)_{e}} \restore
    \save "1,8"."1,8"="Box4"*[F-:<10pt>]\frm{} \restore
    \save "Box4"+<0.25cm,-.7cm>*{\tiny{\b(F)_x}} \restore
    \ar@{<~}"Box2"+<1.7cm,-0.6cm>;"Box1"+<-0.4cm,-0.6cm>^{\pi_{\widetilde{e}}}
        \ar@{<~>}"Box3"+<0.4cm,0cm>;"Box4"+<-0.3cm,0cm>^{\phi}
    \save "1,4"+<-0.8cm,0.8cm>."2,6"="BigBox"*++++++[F-:<10pt>]\frm{} \restore
    \save "BigBox"+<2.5cm,-4.0cm>*{\b(G)} \restore
}
\]
The poset $\b(H)$ has an $\H$-matching at $s$, indicated on the Hasse diagram by bold lines. 
The poset $\b(F)$ has a trivial $\H$-matching at $x$, whose restriction to $\b(F)_x = \b(F)$ 
is also trivial. Pulling back these $\H$-matchings to $\b(G)$ along $\pi_{\widetilde{e}}$ 
and $\phi$, one obtains an $\H$-matching of $\b(G)$ at $s$, indicated by bold lines in the 
Hasse diagram.
\end{example}

Example \ref{ex:a2Matching} is somewhat anomalous in that $\pi_{\widetilde{e}}$ is an isomorphism 
of posets here. In general there may be more coverings in $\b(H)$ than in $\b(G)_{\widetilde{e}}$, 
so the set-theoretic inverse of $\pi_{\widetilde{e}}$ will not be order-preserving. Another anomaly 
is that in this example, the graph $F$ consists of a single vertex, so $\b(F)$ has a trivial
$\H$-matching, in which $x$ is the unique unmatched element. In other cases (when $G \neq A_2$), the
corresponding graph $F$ has two or more vertices, and Property $\h{2}$ implies that the elements 
in $\b(G)_{e}$ that are unmatched in $M_{e}$ are exactly $\bn(F) + \bn(F\setminus x)$ maximal elements.
(In particular $x$ is matched.) This leads to the following lemma.

\begin{lem} \label{lem:Edge}
Assume $G$ is not isomorphic to $A_2$. The matching $M$ on $\b(G)$ obtained by combining the matching $M_{\widetilde{e}}$ on $\b(G)_{\widetilde{e}}$ and the matching $M_{e}$ on $\b(G)_{e}$ is an $\H$-matching of $\b(G)$ at $s$.  Furthermore,
\begin{equation*}
\bn(G) = \bn(G - e) + \bn(G/e) + \bn(G -[e]).
\end{equation*}
\end{lem}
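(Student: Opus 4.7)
The plan is to verify that the combined matching $M := M_{\widetilde{e}} \sqcup M_{e}$ satisfies properties $\h{1}$, $\h{2}$, $\h{3}$ at $s$ and is acyclic; the recursive formula for $\bn(G)$ will then follow from the count of unmatched elements in $\h{1}$.

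For $\h{1}$, I would count unmatched elements on each piece separately. Because $\pi_{\widetilde{e}}$ is a poset bijection and $M_{\widetilde{e}}$ is pulled back from $M_H$, the unmatched elements of $M_{\widetilde{e}}$ correspond bijectively to those of $M_H$; by $\h{1}$ for $M_H$, this contributes one rank-$0$ element and $\bn(H) = \bn(G-e)$ maximal elements of $\b(G)$ (of length $|G|$). The assumption $G \not\cong A_2$ ensures $F \setminus x \neq \emptyset$, so $\h{2}$ for $M_F$ at $x$ applies and yields $\bn(F) + \bn(F\setminus x) = \bn(G/e) + \bn(G-[e])$ unmatched maximal elements in $M_F|_{\b(F)_x}$; under $\phi^{-1}$ these lift to maximal elements of $\b(G)$ (of length $|F|+1 = |G|$). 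Summing produces $\h{1}$ with the claimed formula for $\bn(G)$. For $\h{2}$, I would decompose $\b(G)_s = (\b(G)_{\widetilde{e}} \cap \b(G)_s) \sqcup \b(G)_e$, noting that every element of $\b(G)_e$ contains $s$, identify the first summand with $\b(H)_s$ via $\pi_{\widetilde{e}}$, apply $\h{2}$ for $M_H$ there using $(G-e)\setminus s = G\setminus s$, and combine with the count on $\b(G)_e$ to get the total $\bn(G) + \bn(G\setminus s)$.

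To establish $\h{3}$, I would handle $M_e$ and $M_{\widetilde{e}}$ matched edges separately. For an edge in $M_e$ with $\tau$ containing $s$, both $\phi(\sigma)$ and $\phi(\tau)$ lie in $\b(F)_x$; case $\h{3}$(a) for $M_F$ would force $\phi(\sigma)$ not to contain $x$, a contradiction, so case $\h{3}$(b) holds, and lifting via $\phi^{-1}$ (which replaces $x$ by the block $st$) converts ``delete a letter left of $x$'' into ``delete a letter left of $s$'' in $\tau = \alpha s t \gamma$. For an edge in $M_{\widetilde{e}}$, the analogous property for $M_H$ furnishes word representatives of $\pi(\sigma), \pi(\tau)$; the delicate step is to check that these may be chosen so that $s$ is never immediately to the left of $t$, and hence are valid $\b(G)$-representatives of $\sigma, \tau \in \b(G)_{\widetilde{e}}$. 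In case (a) the representative of $\pi(\tau)$ has $s$ as its final letter, so no $st$ adjacency can occur; in case (b), I would exploit the freedom of $\b(H)$-commutation together with the defining property of $\b(G)_{\widetilde{e}}$ to produce suitable representatives.

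For acyclicity, suppose $M$ has a directed cycle $C$. By Lemma~\ref{lem:TwoRanks}, $C$ lies in two adjacent ranks. If every up-edge of $C$ belongs to $M_{\widetilde{e}}$, then applying $\pi_{\widetilde{e}}$ (which preserves both matched edges and covering relations among elements of $\b(G)_{\widetilde{e}}$) produces a directed cycle in $M_H$, contradicting its acyclicity; symmetrically for $M_e$ using $\phi$. The hard case is when up-edges of both types appear, which forces at least one down-edge to cross between $\b(G)_{\widetilde{e}}$ and $\b(G)_e$. Lemma~\ref{lem:BadEdges} pins down the rigid form of such crossing covering relations, and I would combine this structure with the positional control on $s$ granted by $\h{3}$ for the up-edges flanking each crossing to derive a contradiction. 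This mixed-case acyclicity analysis is the main obstacle; the other parts reduce to straightforward bookkeeping once the bijections $\pi_{\widetilde{e}}$ and $\phi$ and the inductive properties of $M_H$ and $M_F$ are in place.
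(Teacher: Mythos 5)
Your overall strategy is the same as the paper's: pull back $M_H$ along $\pi_{\widetilde{e}}$ and $M_F|_{\b(F)_x}$ along $\phi$, count unmatched elements to get $\h{1}$, $\h{2}$ and the recursion (including the correct use of $G\not\cong A_2$ to ensure $F\setminus x\neq\emptyset$, and the observation that case $\h{3}$(a) cannot occur for edges of $M_e$), and prove acyclicity by separating cycles contained in one piece from cycles meeting both. However, there is a genuine gap: the mixed-case acyclicity, which you explicitly defer as ``the main obstacle,'' is precisely the heart of the lemma and the sole reason property $\h{3}$ is carried through the induction, and your sketch of ``combining Lemma~\ref{lem:BadEdges} with positional control on $s$ at the up-edges flanking each crossing'' does not yet constitute an argument. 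Moreover, a purely local analysis at the crossings is not obviously enough; what is needed (and what the paper does) is a global invariant maintained along the whole traversal: once the cycle leaves $\b(G)_e$, it does so by a down-edge deleting $s$ or $t$ from an element written $\alpha st\gamma$, so in the resulting element $s$ does not stand to the left of $t$; thereafter every up-edge lies in $M_{\widetilde{e}}$, and $\h{3}$ says it either appends $s$ at the far right or inserts a letter to the left of $s$, while every down-edge merely deletes a letter, and since $s$ and $t$ do not commute in $G$ their relative order is representative-independent. Hence no element reached later can have $s$ to the left of $t$. But re-entering $\b(G)_e$ is only possible along a down-edge of the type described in Lemma~\ref{lem:BadEdges}(2), whose top element has the form $\alpha s w_1\cdots w_r t\gamma$ with $s$ to the left of $t$ --- contradiction, so no cycle meets both pieces. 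Supplying this invariant argument is what your proposal is missing.

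A secondary loose end: for matched edges of $M_{\widetilde{e}}$ you correctly flag that the word representatives furnished by $\h{3}$ for $M_H$ are a priori only $\b(H)$-representatives (they could represent the other $\pi$-preimage, lying in $\b(G)_e$), but you leave case (b) unresolved (``exploit the freedom of $\b(H)$-commutation\dots''). The paper treats this lifting as immediate, so to go beyond the paper here you would need to actually produce the adjusted representative; as written, your proposal neither settles it nor reduces it to something established.
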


\begin{proof}
It is clear that $M$ is a matching on $\b(G)$. The unmatched elements of non-negative rank in $\b(G)$ are exactly one element of rank 0 and $\bn(G)$ maximal elements, where
\begin{align}
\label{eqn: G in terms of H and F}\bn(G) &= \bn(H) + \bn(F) + \bn(F \setminus x) \\
\nonumber     &= \bn(G - e) + \bn(G/e) + \bn(G -[e]).
\end{align}
This proves $\h{1}$ for $M$, and, indeed, the recursive formula for the function $\bn$.

Now consider the restriction of the matching $M$ to elements of $\b(G)$ which contain the letter $s$.  The set of such elements in $\b(G)_{\widetilde{e}}$ which are unmatched correspond to the unmatched elements in $\b(H)_s$.  By Property $\h{2}$ of the matching $M_H$, these are $\bn(H) + \bn(H \setminus s)$ maximal elements. Every element of $\b(G)_{e}$ contains $s$, so this set of elements still gives exactly $\bn(F) + \bn(F\setminus x)$ unmatched maximal elements. Since $H \setminus s = G\setminus s$, there are exactly $\bn(G) + \bn(G\setminus s)$ unmatched elements in $\b(G)_s$ by equation~\eqref{eqn: G in terms of H and F}, proving $\h{2}$ for $M$.

To prove $\h{3}$ for $M$, recall that every matched edge in $M$ lies in either $M_{\widetilde{e}}$ or $M_{e}$. A matched edge in $M_{\widetilde{e}}$ certainly has the required form, since it comes from a matched edge in $M_H$, which is an $\H$-matching at $s$. A matched edge $\sigma \lessdot \tau$ in $M_{e}$ corresponds to a matched edge $\phi(\sigma) \lessdot \phi(\tau)$ in $M_F$ between two elements containing $x$. But $M_F$ is an $\H$-matching at $x$, and Property $\h{3}$ for $M_F$ implies that $\phi(\sigma)$ and $\phi(\tau)$ can be written so that $\phi(\sigma)$ is obtained from $\phi(\tau)$ by deleting a letter to the left of $x$. It follows that $\tau$ and $\sigma$ can be written so that $\sigma$ is obtained from $\tau$ by deleting a letter to the left of $s$.

It remains to show that $M$ is acyclic. Recall the set decomposition of $\b(G)$ in equation~\eqref{eqn:BG break down}, and the covering relations between $\b(G)_{e}$ and $\b(G)_{\widetilde{e}}$ described in Lemma \ref{lem:BadEdges}.  Additionally, note that in the matching $M$, there are no matched edges between these two sets.

A cycle in $M$ lying entirely in $\b(G)_{\widetilde{e}}$ would induce a cycle in $\b(H)$, contradicting the acyclicity of $M_H$. Similarly, the acyclicity of $M_F$ implies that no cycle lies entirely in $\b(G)_{e}$. It remains only to show that no cycle involves both of these sets.  If there is such a cycle $C$, then the covering relations in $C$ going between the disjoint sets must be ``down'' edges.  To move from $\b(G)_{e}$ to $\b(G)_{\widetilde{e}}$, we must delete either $s$ or $t$ from an element of the form $\alpha st \gamma$. Traversing along $C$, we alternately move up along a matched edge in $M$ by adding an element, and down along an unmatched edge by deleting an element. Property $\h{3}$ guarantees that at no step in this process does the letter $s$ appear to the left of the letter $t$.  Therefore it is impossible to get back to $\b(G)_{e}$ by traversing along $C$, contradicting the assumption that $C$ is a cycle. We deduce that $M$ is acyclic.
\end{proof}

\subsection{Considering an isolated vertex}\ 

We now turn our attention to the case where $s$ is an isolated vertex in $G$.  That is, $s$ has no incident edges in $G$.

\begin{lem} \label{lem:vIsolated}
If $s \in G$ is an isolated vertex, and $\b(G\setminus s)$ has an $\H$-matching at $x$ for some $x \in (G \setminus s)$, then $\b(G)$ has an $\H$-matching at $s$.
\end{lem}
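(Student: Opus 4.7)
The plan is to build an $\H$-matching of $\b(G)$ at $s$ from the given $\H$-matching $M'$ of $\b(G\setminus s)$. Since $s$ is isolated, it commutes with every other generator, so every element of $\b(G)$ admits a word representative with $s$ appearing at the right end whenever $s$ appears at all. This partitions $\b(G)$ into $B \sqcup T$, where $B \cong \b(G\setminus s)$ consists of the elements not containing $s$ and $T = \{\sigma s : \sigma \in \b(G\setminus s)\}$ consists of those that do. Let $r_0$ denote the unique unmatched rank-$0$ element of $M'$, and let $\rho_1, \ldots, \rho_{\bn(G\setminus s)}$ denote the unmatched maximal elements of $M'$. I propose defining $M$ on $\b(G)$ as the union of: (i) the pairs $\sigma \lessdot \tau$ and $\sigma s \lessdot \tau s$ for each $(\sigma, \tau) \in M'$; (ii) the single edge $s \lessdot r_0 s$; and (iii) the edges $\rho_i \lessdot \rho_i s$ for every $\rho_i \neq r_0$.

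Properties $\h{1}$ and $\h{2}$ should follow by direct bookkeeping. The only element of non-negative rank left unmatched by $M$ is $r_0 \in B$, which agrees with the value $\bn(G) = \bn(G\setminus s) \cdot \bn(\delta_1) = 0$ provided by Proposition~\ref{prop:disjoint union}. Restricted to $\b(G)_s = T$, the unmatched elements are exactly the $\rho_i s$ with $\rho_i \neq r_0$, all of which are maximal in $\b(G)$; their count $\bn(G\setminus s)$ equals $\bn(G) + \bn(G\setminus s)$, as required. Property $\h{3}$ is immediate: edges of types (i) and (ii) satisfy condition (b) via deletion of a letter to the left of $s$ in the word representatives $\tau s$ and $r_0 s$ respectively, while edges of type (iii) satisfy condition (a).

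The main obstacle is the acyclicity of $M$. Suppose for contradiction that $P(M)$ contains a directed cycle $C$; by Lemma~\ref{lem:TwoRanks} it lies in two adjacent ranks. If $C$ is contained entirely in $B$, then under $B \cong \b(G\setminus s)$ it descends to a directed cycle in $P(M')$, contradicting acyclicity of $M'$. If $C$ is contained entirely in $T$, then via the correspondence $\sigma s \leftrightarrow \sigma$ the only matched edge inside $T$ not coming from $M'$ is $(s, r_0 s)$; a cycle avoiding this edge projects to a cycle in $P(M')$, while a cycle using it enters $r_0 s$ from $s$ but cannot leave $r_0 s$ within $T$, since the only in-$T$ covering below $r_0 s$ is $s \lessdot r_0 s$, which has already been oriented upward by $M$.

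For a mixed cycle, the only $B \to T$ up-edges available in $P(M)$ are the between-edges $\rho_i \to \rho_i s$ for $\rho_i$ unmatched maximal in $M'$, so $C$ must live at ranks $|G|-2$ and $|G|-1$. Since $\b(G\setminus s)$ has no elements of rank $|G|-1$, every $B$-vertex of $C$ sits at rank $|G|-2$ and forms a singleton segment within $C$. Such a segment $\{\beta\}$ must be entered from $T$ and exited to $T$, yet the only covering relation between $\beta$ and any element of $T$ is $\beta \lessdot \beta s$; in $P(M)$ this covering is oriented in a single direction and therefore cannot simultaneously supply the predecessor-edge and the successor-edge of $\beta$ in the cycle. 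Hence no such cycle exists, and $M$ is the desired $\H$-matching of $\b(G)$ at $s$.
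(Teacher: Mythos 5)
Your construction is exactly the paper's: the same splitting of $\b(G)$ into the elements with and without the letter $s$, the same three families of matched edges (copies of $M'$ on both parts, $s\lessdot r_0s$, and $\rho_i\lessdot\rho_i s$ for the unmatched maximal $\rho_i$), and the same acyclicity analysis, your ``unique covering $\beta\lessdot\beta s$'' step being a rephrasing of the paper's observation that nothing can move down into an unmatched maximal element of $\b(G\setminus s)$. One small caution: for $\bn(G)=0$ you cite Proposition~\ref{prop:disjoint union}, whose numerical claim is deduced in the paper from the main theorem that this very induction is proving; it is cleaner (and avoids any appearance of circularity) to get $\bn(G)=0$ directly from the Euler-characteristic definition of $\bn$ via Remark~\ref{rem:m well defined}, or from the fact that $\Delta(G)$ is a cone over $\Delta(G\setminus s)$.
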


\begin{proof}
Set $H := G\setminus s$. Notice that $H$ is nonempty since we are assuming that $G$ has $\e+1 \geq 1$ edges.
Since $s$ is an isolated vertex in $G$, the letter $s$ commutes with every element in $\b(H)$. 
Hence we have a decomposition of sets 
\begin{equation}\label{eqn:isolated v decomp}
\b(G) = \b(H) \sqcup (\b(H))s \sqcup \{s\}.
\end{equation}
This is not a decomposition of posets as every element $\sigma \in \b(H)$ is covered by $\sigma s \in (\b(H))s$, and $s$ is covered by $ts \in (\b(H))s$ for every vertex $t \in H$. However these are the only covering relations between distinct parts in the decomposition of equation~\eqref{eqn:isolated v decomp}.

Now suppose that $M_H$ is an $\H$-matching of $\b(G \setminus s)$ at some vertex $x$. Let $\sigma_1, \ldots, \sigma_{\bn(H)}$ be the maximal elements in $\b(H)$ which are unmatched by $M_H$, and denote the unmatched rank $0$ element by $1$. We construct a matching $M$ of $\b(G)$ from $M_H$, by including matched edges as follows. 
\begin{itemize}
  \item If $\sigma \lessdot \tau$ is a matched edge in $M_H$, then $\sigma \lessdot \tau$ and $\sigma s \lessdot \tau s$ are matched edges in $M$
  \item $s \lessdot 1s$ is a matched edge in $M$
  \item $\sigma_i \lessdot \sigma_i s$ is a matched edge in $M$ for each $i \in \{1, \ldots, \bn(H)\}$
\end{itemize} 

To see that $M$ is acyclic, first note that no cycle can be contained entirely in $\b(H)$ or $(\b(H))s$ since $M_H$ is acyclic. Next note that no cycle can include the element $s \in \b(G)$, since from $s$ one can only move up to $1s$, from which one can only move to $1$, whence one cannot escape. Therefore any cycle would have to be contained in the blocks $\b(H)$ and $(\b(H))s$, intersecting both. In particular, at some point, the cycle moves from $\b(H)$ to $(\b(H))s$. The only way to do this is by moving along one of the matched edges $\sigma_i \lessdot \sigma_i s$, so a cycle has to pass through some $\sigma_i$. However, there is no way to move down to $\sigma_i$, so it cannot be part of a cycle. We conclude that $M$ is acyclic.

We now show that $M$ satisfies $\h{1}$ -- $\h{3}$. Property $\h{1}$ is immediate, since the only unmatched element in $M$ of non-negative rank is the rank $0$ element $1$. The unmatched elements in $\b(G)_s$ are the maximal elements $\sigma_1s, \ldots, \sigma_{\bn(H)}s$, proving $\h{2}$. Property $\h{3}$ follows from the fact that, as $s$ is an isolated point in $G$, any element in $\b(G)$ containing $s$ can be written so that $s$ appears on the far right.
\end{proof}

\begin{rem}
It is interesting to note a fact that arises in the preceding proof: if the graph $G$ has an isolated vertex, then $\bn(G) = 0$.  In particular, this implies that $\Delta(G)$ is contractible.  This fact will be discussed further in Section~\ref{section:corollaries}.
\end{rem}

\subsection{Completing the proof}\ 

It is now straightforward to complete the proof of Theorem \ref{thm:main result}. 

\begin{proof}[Proof of Theorem~\ref{thm:main result}] By Theorem \ref{thm:DiscreteMorse}, the first claim follows if we can show that $\H(\e)$ holds for all $\e \geq 0$. We prove this by induction on $\e$. The base case, where $\e = 0$ is Lemma \ref{lem:BaseCase}. For the inductive step, assume that $\H(\e)$ holds for some $\e \geq 0$ and consider a graph $G$ with $\e+1$ edges. Let $s$ be a vertex in $G$. If $s$ is the endpoint of an edge $e$ in $G$, then $\b(G)$ has an $\H$-matching at $s$ by Lemma \ref{lem:Edge} when $G$ is not isomorphic to $A_2$, and by Example \ref{ex:a2Matching} in the special case $G \cong A_2$. If $s$ is isolated, then, since $\e+1 \geq 1$, there is some vertex $s' \in G$ that is the endpoint of an edge $\{s',t\}$. Since the graph $G \setminus s$ also has $\e+1$ edges, Lemma \ref{lem:Edge} implies that $\b(G \setminus s)$ has an $\H$-matching at $s'$. Lemma \ref{lem:vIsolated} then implies that $\b(G)$ has an $\H$-matching at $s$.

The inductive formula for the boolean number $\bn(G)$ follows from Lemma~\ref{lem:Edge}. The initial conditions $\bn(\delta_n) = 0$ and $\bn(A_2) = 1$ are consequences of Lemma~\ref{lem:BaseCase} and Example \ref{ex:a2Matching} (or Example \ref{ex:a2}), respectively.
\end{proof}

\begin{rem}
An alternative proof of Theorem~\ref{thm:main result} proceeds as follows.  First, one proves that the complex $\Delta(G)$ is shellable.  To obtain the shelling, we represent each rank $|G|-1$ element of $\b(G)$ by its lexicographically least word representative, and order these least representatives lexicographically.  Shellability implies that the homotopy type of the complex is a wedge of spheres.  To calculate the number $\bn(G)$ of such spheres, one can adapt the inductive argument presented in this section.  The shellability result has been obtained independently by Jonsson and Welker in \cite{jonsson}, and we are grateful to Vic Reiner for pointing out this reference to us.
\end{rem}

\section{Corollaries to Theorem~\ref{thm:main result}}\label{section:corollaries}

Theorem~\ref{thm:main result} gives the homotopy type of the boolean complex for any finite simple graph $G$, specifically that it is the wedge of $\bn(G)$ spheres $S^{|G|-1}$.  For some classes of graphs, we can obtain more specific data regarding the function $\bn$.  The most interesting of these results are listed below, with some proofs left to the reader.

In the case where the graph $G$ has any leaves (vertices of degree $1$), the recursion of equation~\eqref{eqn:m + edge} can be simplified by Proposition~\ref{prop:disjoint union}.

\begin{cor}\label{cor:tree recursion}
If $G$ has any leaves, then the computation of $\bn(G)$ can be simplified as depicted in Figure~\ref{fig:inductive counting}.
\end{cor}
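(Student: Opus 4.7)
The plan is to apply the recursive formula \eqref{eqn:m + edge} from Theorem~\ref{thm:main result} to a carefully chosen leaf edge and then simplify each of the three resulting terms. Let $t$ be a leaf of $G$, $s$ its unique neighbor, and $e = \{s,t\}$ the incident edge. Provided $G -[e]$ is nonempty (which must be the case in the recursion; otherwise $G$ is just the edge $A_2$ and nothing needs simplifying), equation~\eqref{eqn:m + edge} gives
\[ \bn(G) = \bn(G-e) + \bn(G/e) + \bn(G-[e]). \]
I will show that the first term vanishes and identify the second term with the graph obtained by simply deleting the leaf.

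For the term $\bn(G-e)$, deleting the edge $e$ from $G$ strands $t$ as an isolated vertex, so $G - e = (G \setminus t) \sqcup \delta_1$. By Proposition~\ref{prop:disjoint union} and the initial condition $\bn(\delta_1) = 0$ from Theorem~\ref{thm:main result}, we conclude
\[ \bn(G-e) = \bn(G \setminus t)\,\bn(\delta_1) = 0. \]
(This also matches the remark following Lemma~\ref{lem:vIsolated} that any graph with an isolated vertex has boolean number $0$.)

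For the term $\bn(G/e)$, I would observe that since $t$ is a leaf, $e$ is the unique edge incident to $t$; therefore contracting $e$ produces no new loops (there is no other edge between $s$ and $t$) and no redundant parallel edges (a parallel edge after contraction would require a common neighbor of $s$ and $t$ other than themselves, but $t$ has no such neighbor). Thus $G/e$ is literally $G$ with the leaf $t$ removed, i.e., $G/e = G \setminus t$. Substituting these evaluations back into \eqref{eqn:m + edge} yields the simplified recursion
\[ \bn(G) = \bn(G \setminus t) + \bn(G - [e]), \]
which is the identity depicted in Figure~\ref{fig:inductive counting}. There is no real obstacle here; the only subtle point is confirming that leaf contraction does not introduce loops or multiple edges that would need to be pruned, which is immediate from the fact that a leaf has degree one.
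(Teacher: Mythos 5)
Your proposal is correct and takes essentially the approach the paper intends: apply the edge recursion \eqref{eqn:m + edge} at a leaf edge, kill the deletion term via Proposition~\ref{prop:disjoint union} (an isolated vertex forces boolean number $0$), and observe that simple contraction of a leaf edge is the same as deleting the leaf, yielding $\bn(G) = \bn(G\setminus t) + \bn(G-[e])$. The only thing worth adding is that the second identity in Figure~\ref{fig:inductive counting} is the special case of your formula in which $G-[e]$ itself has an isolated vertex, so that the extraction term also vanishes.
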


\begin{figure}[htbp]
\begin{eqnarray*}
\bn\left(\parbox[c]{.66in}{\epsfig{file=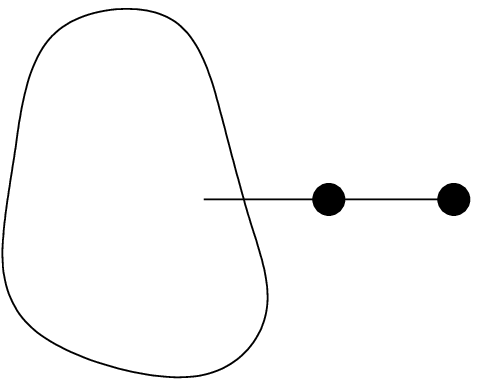,scale=.35}} \right) &=& \bn\left(\parbox[c]{.48in}{\epsfig{file=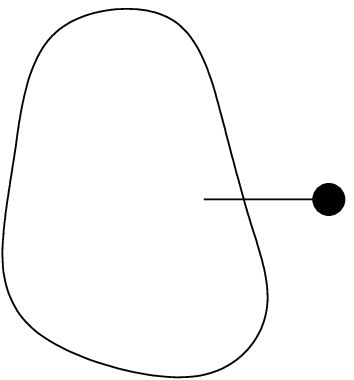,scale=.35}} \right) + \bn\left(\parbox[c]{.39in}{\epsfig{file=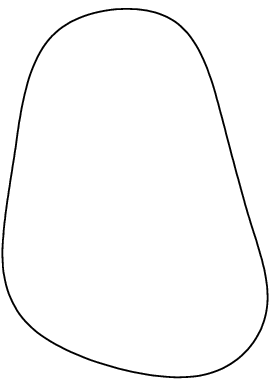,scale=.35}} \right)\\
\rule[0mm]{0mm}{10mm}
\bn\left(\parbox[c]{.48in}{\epsfig{file=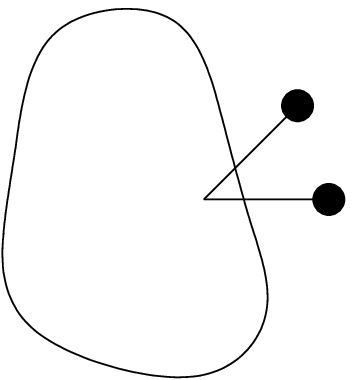,scale=.35}} \right) &=& \bn\left(\parbox[c]{.48in}{\epsfig{file=tree-pic3.eps,scale=.35}} \right)
\end{eqnarray*}
\caption{Simplification for recursively calculating $\bn(G)$ when the graph $G$ has a leaf.  The region without detail is the rest of $G$.}\label{fig:inductive counting}
\end{figure}

The recursion in Corollary~\ref{cor:tree recursion}, together with the initial conditions of Theorem~\ref{thm:main result}, enables efficient calculation of the boolean number of any tree.  Moreover, it shows that if $T$ is a tree with more than one vertex, then $\bn(T) > 0$.  
The recursive formula for calculating $\bn$ shows that adding edges does not decrease the
boolean number. From this and Proposition~\ref{prop:disjoint union} and we draw the following conclusion.

\begin{cor}\label{cor:disjoint point}
A finite simple graph $G$ has an isolated vertex if and only if $\bn(G)=0$.  That is, the center of a Coxeter group contains a generator of the group if and only if the group's boolean complex is contractible.
\end{cor}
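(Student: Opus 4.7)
The plan is to prove the two implications separately, with the main new ingredient being the monotonicity of $\bn$ under edge addition. For the easy direction, if $G$ has an isolated vertex $s$, then $G$ decomposes as a disjoint union $G = (G \setminus s) \sqcup \delta_1$, so Proposition~\ref{prop:disjoint union} combined with the initial value $\bn(\delta_1) = 0$ from Theorem~\ref{thm:main result} immediately gives $\bn(G) = \bn(G \setminus s) \cdot \bn(\delta_1) = 0$. (The degenerate case $G = \delta_1$ is itself the base case.)

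For the converse, I would first establish monotonicity: for any edge $e$ of $G$, $\bn(G) \geq \bn(G - e)$. Whenever $G - [e]$ is nonempty, this is immediate from the recursive formula in equation~\eqref{eqn:m + edge}, since every value of $\bn$ is nonnegative (being the count of spheres in a wedge sum). The only case not covered by the recursion is $G = A_2$, where one checks directly that $\bn(A_2) = 1 \geq 0 = \bn(\delta_2)$.

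Now suppose $G$ has no isolated vertex, and decompose $G$ into connected components $H_1, \dots, H_k$. Each $H_i$ has at least two vertices and is connected, hence contains a spanning tree $T_i$ with at least two vertices. Iterating monotonicity gives $\bn(H_i) \geq \bn(T_i)$, and the remark following Corollary~\ref{cor:tree recursion} already noted that $\bn(T_i) > 0$ for such a tree. Multiplicativity from Proposition~\ref{prop:disjoint union} then yields $\bn(G) = \prod_i \bn(H_i) > 0$. The Coxeter-theoretic reformulation is immediate once one notes that a generator $s$ lies in the center of $W$ precisely when it commutes with every other generator, i.e. when $s$ is an isolated vertex of the unlabeled Coxeter graph, and that $\Delta(G)$ is contractible if and only if $|\Delta(G)| \simeq 0 \cdot S^{|G|-1}$, i.e. $\bn(G) = 0$.

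There is no serious obstacle: the corollary essentially packages together multiplicativity (Proposition~\ref{prop:disjoint union}), monotonicity (a free consequence of the recursion plus nonnegativity of $\bn$), and the positivity of $\bn$ on nontrivial trees. The only bit of care needed is to handle the base case $G \cong A_2$ separately in the monotonicity argument, since the recursion in equation~\eqref{eqn:m + edge} is only stated when $G - [e]$ is nonempty.
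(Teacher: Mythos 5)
Your proposal is correct and follows essentially the same route as the paper, which derives this corollary from the positivity of $\bn$ on trees with more than one vertex (via Corollary~\ref{cor:tree recursion}), the monotonicity of $\bn$ under edge addition coming from the recursion, and the multiplicativity in Proposition~\ref{prop:disjoint union}, with the easy direction following from $\bn(\delta_1)=0$ (cf.\ the remark after Lemma~\ref{lem:vIsolated}). You merely make explicit a few details the paper leaves implicit, such as the $A_2$ case of the monotonicity argument.
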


The matchings constructed in Section \ref{section:proof} actually allow us to determine the homotopy type of the skeleta of the boolean complex.

\begin{cor}\label{cor:skeleton}
Let $G$ be a finite simple graph.  For each $r \in [0, |G|-1]$, the $r$-skeleton of the complex $\Delta(G)$ is homotopy equivalent to a wedge of $r$-spheres.
\end{cor}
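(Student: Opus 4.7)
The plan is to re-use an $\H$-matching of $\b(G)$ from Section~\ref{section:proof} and restrict it to the face poset of the $r$-skeleton. Recall that the $r$-skeleton $\Delta(G)^{(r)}$ is a regular cell complex whose face poset is the simplicial subposet $\b(G)^{\le r} \subseteq \b(G)$ obtained by discarding all elements of rank strictly greater than $r$. Fix an $\H$-matching $M$ of $\b(G)$ at any vertex $s \in G$, whose existence is guaranteed by the inductive construction carried out in Section~\ref{section:proof}, and let $M_r$ denote the matching on $\b(G)^{\le r}$ obtained by retaining exactly those matched pairs of $M$ whose elements both have rank at most $r$.

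First I would verify that $M_r$ is acyclic: any directed cycle in $\b(G)^{\le r}(M_r)$ would automatically be a directed cycle in $\b(G)(M)$, contradicting the acyclicity of $M$. Next I would count the unmatched elements of $M_r$ of non-negative rank. By property $\h{1}$ of $M$, the only unmatched elements of $\b(G)$ of non-negative rank are one distinguished rank $0$ element together with $\bn(G)$ maximal elements. For any $\sigma \in \b(G)$ with $1 \le \mathrm{rank}(\sigma) \le r-1$, its $M$-partner has rank in $[0,r]$ and therefore still lies in $\b(G)^{\le r}$, so $\sigma$ remains matched in $M_r$. Hence the only way new unmatched elements appear after restriction is at the top rank $r$: letting $N_r$ denote the number of rank $r$ elements whose $M$-partner has rank $r+1$, the unmatched set of $M_r$ of non-negative rank consists of one rank $0$ element and $N_r$ rank $r$ elements.

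Applying Theorem~\ref{thm:DiscreteMorse} together with Remark~\ref{rem:m well defined} then yields $\Delta(G)^{(r)} \simeq N_r \cdot S^r$, proving the corollary. The only point requiring attention is the bookkeeping in the extreme cases: when $r = 0$ the restricted matching is empty, so all $|G|$ rank $0$ elements are unmatched, and one interprets this as one distinguished rank $0$ element together with $N_0 = |G|-1$ further rank $0$ elements, corresponding to a discrete set of $|G|$ points, i.e.\ a wedge of $|G|-1$ copies of $S^0$; and when $r = |G|-1$ one has $M_r = M$ and $N_r = \bn(G)$, recovering Theorem~\ref{thm:main result}. No substantive topological obstacle arises beyond the discrete Morse machinery already assembled in Section~\ref{section:proof}.
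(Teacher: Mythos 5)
Your proposal is correct and follows essentially the same route as the paper: the paper's proof simply restricts the matching from Section~\ref{section:proof} to the face poset of the $r$-skeleton and observes that the only unmatched elements are one of rank $0$ and some number of rank $r$, which is exactly your argument (your count $N_r$ matches the paper's subsequent remark computing the number of spheres from the rank generating function). The extra details you supply --- inherited acyclicity, the partner-rank bookkeeping, and the $r=0$ case --- are correct elaborations of the same idea.
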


\begin{proof}
The matching on $\b(G)$ constructed in Section~\ref{section:proof} restricts to a matching on the face poset of the $r$-skeleton, whose only unmatched elements are one of rank $0$ and some number of rank $r$.
\end{proof}

The number of spheres in Corollary~\ref{cor:skeleton} can be computed from the rank generating function of $\b(G)$, which satisfies a recursion analogous to that of the function $\bn$.  More precisely, if there are $f_{r'}$ elements of rank $r' \ge 0$ in $\b(G)$, and there are $v_{r+1}$ unmatched elements of rank $r+1$ in the restriction of the matching to the $(r+1)$-skeleton of $\Delta(G)$, then there are $f_r - (f_{r+1} - v_{r+1})$ unmatched elements of rank $r$ in the restriction of the matching to the $r$-skeleton.

We now determine the homotopy types of the boolean complexes of some specific families of graphs.

\begin{defn}
For $n \ge 1$, let $S_n$ be the tree on $n$ vertices with a single vertex of degree $n-1$.
\end{defn}

\begin{cor}\label{cor:star}
For all $n \ge 1$, $\bn(S_n) = 1$.  That is, the boolean complex $\Delta(S_n)$ is homotopy equivalent to $S^{n-1}$.
\end{cor}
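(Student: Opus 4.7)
The plan is a straightforward induction on $n$ using the recursive formula in Theorem~\ref{thm:main result}, together with the multiplicativity from Proposition~\ref{prop:disjoint union} (equivalently, Corollary~\ref{cor:disjoint point}). The base case $n=2$ is immediate, since $S_2 = A_2$ and $\bn(A_2)=1$ by definition.

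For the inductive step with $n \geq 3$, I would pick any edge $e$ of $S_n$. Every edge of $S_n$ is incident to the unique central vertex and to one of the $n-1$ leaves, so by symmetry the choice is immaterial. Then I would identify the three graphs appearing in the recursion: $S_n - e$ is the disjoint union $S_{n-1} \sqcup \delta_1$, the contraction $S_n/e$ is again a star (one of the leaves is absorbed into the center), so $S_n/e \cong S_{n-1}$, and the extraction $S_n-[e]$ removes both the center and one leaf, leaving the remaining $n-2$ leaves isolated, so $S_n-[e] \cong \delta_{n-2}$.

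Applying Theorem~\ref{thm:main result} to the edge $e$ (valid because $S_n-[e] = \delta_{n-2}$ is nonempty when $n \geq 3$) gives
\[
\bn(S_n) \;=\; \bn(S_{n-1} \sqcup \delta_1) \;+\; \bn(S_{n-1}) \;+\; \bn(\delta_{n-2}).
\]
By Proposition~\ref{prop:disjoint union}, $\bn(S_{n-1} \sqcup \delta_1) = \bn(S_{n-1})\bn(\delta_1) = 0$ since $\bn(\delta_1)=0$, and $\bn(\delta_{n-2}) = 0$ by the initial condition of Theorem~\ref{thm:main result}. Hence $\bn(S_n) = \bn(S_{n-1})$, which equals $1$ by the inductive hypothesis.

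There is no real obstacle here; the only subtlety worth noting is identifying the three graphs $S_n - e$, $S_n/e$, and $S_n - [e]$ correctly, which is essentially a picture-level check. Alternatively one can bypass the $S_n - e$ and $S_n - [e]$ terms entirely by invoking the leaf-simplified recursion of Corollary~\ref{cor:tree recursion} (choosing $e$ to be the edge at a leaf), which immediately yields $\bn(S_n) = \bn(S_{n-1})$. The homotopy conclusion $|\Delta(S_n)| \simeq S^{n-1}$ then follows directly from Theorem~\ref{thm:main result}.
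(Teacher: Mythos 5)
Your argument is correct and is essentially the intended one: the paper leaves this corollary to the reader, but its surrounding discussion points exactly to your route, namely the edge recursion of Theorem~\ref{thm:main result} at a pendant edge (equivalently the leaf simplification of Corollary~\ref{cor:tree recursion}), with $S_n-e = S_{n-1}\sqcup\delta_1$, $S_n/e \cong S_{n-1}$, $S_n-[e]\cong\delta_{n-2}$ giving $\bn(S_n)=\bn(S_{n-1})$ and base case $\bn(S_2)=\bn(A_2)=1$. The only caveat is the boundary value $n=1$: there $S_1=\delta_1$, so $\bn(S_1)=0$ and $\Delta(S_1)$ is a point, so the statement's range ``$n\ge 1$'' is an off-by-one in the corollary itself rather than a defect of your induction, which correctly starts at $n=2$.
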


\begin{defn}
For $n \ge 1$, let $K_n$ be the complete graph on $n$ vertices.
\end{defn}

\begin{cor}\label{cor:complete}
For all $n \ge 1$, the function $\bn(K_n)$ satisfies the recurrence
\begin{equation*}
\bn(K_n) = (n-1)\big(\bn(K_{n-1}) + \bn(K_{n-2}) \big),
\end{equation*}
\noindent with initial values $\bn(K_1) = 0$ and $\bn(K_2) = 1$.
\end{cor}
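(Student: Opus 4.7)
The plan is to apply the main recursion from Theorem~\ref{thm:main result} repeatedly, peeling off edges incident to a single vertex of $K_n$ until that vertex becomes isolated, and then using the fact (from Corollary~\ref{cor:disjoint point}) that a graph with an isolated vertex has boolean number zero.

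Fix $n \ge 3$ and label the vertices of $K_n$ as $\{1, 2, \ldots, n\}$. For each $k$ with $0 \le k \le n-1$, let $K_n^{(k)}$ denote the graph obtained from $K_n$ by removing the edges $\{1,n\}, \{2,n\}, \ldots, \{k,n\}$. Thus $K_n^{(0)} = K_n$, while $K_n^{(n-1)}$ is the disjoint union of $K_{n-1}$ (on vertices $\{1,\ldots,n-1\}$) and the isolated vertex $n$. By Proposition~\ref{prop:disjoint union} together with the initial condition $\bn(\delta_1) = 0$, we have $\bn(K_n^{(n-1)}) = 0$.

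For $1 \le k \le n-1$, I will apply the recursive formula~\eqref{eqn:m + edge} to the edge $e_k = \{k,n\}$ in $K_n^{(k-1)}$; this is legal since $n \ge 3$ implies that $K_n^{(k-1)} - [e_k]$ has at least one vertex. The three resulting graphs are straightforward to identify: $K_n^{(k-1)} - e_k = K_n^{(k)}$; the contraction $K_n^{(k-1)}/e_k$ merges the vertex $n$ (adjacent to $\{k, k+1,\ldots,n-1\}$) with the vertex $k$ (adjacent to everything else), producing a vertex adjacent to all of $\{1,\ldots,\widehat{k},\ldots,n-1\}$, and the surviving induced subgraph on those vertices is complete, so $K_n^{(k-1)}/e_k \cong K_{n-1}$; finally, the extraction $K_n^{(k-1)} - [e_k]$ simply deletes both $k$ and $n$, leaving the complete graph on $\{1,\ldots,\widehat{k},\ldots,n-1\}$, which is $K_{n-2}$. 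Hence for each such $k$,
\begin{equation*}
 \bn(K_n^{(k-1)}) - \bn(K_n^{(k)}) = \bn(K_{n-1}) + \bn(K_{n-2}).
\end{equation*}

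Summing this telescoping identity over $k = 1, \ldots, n-1$ gives
\begin{equation*}
 \bn(K_n) - 0 = \bn(K_n^{(0)}) - \bn(K_n^{(n-1)}) = (n-1)\bigl(\bn(K_{n-1}) + \bn(K_{n-2})\bigr),
\end{equation*}
which is the claimed recurrence. The initial values $\bn(K_1) = \bn(\delta_1) = 0$ and $\bn(K_2) = \bn(A_2) = 1$ are immediate from Theorem~\ref{thm:main result}. There is no substantial obstacle here; the only thing to watch is getting the contraction graph right (verifying that contracting $\{k,n\}$ in $K_n^{(k-1)}$ produces $K_{n-1}$ and not something smaller), which follows because even though $k$ was adjacent to vertices $\{1,\ldots,k-1\}$ via edges not touching $n$, and $n$ was only adjacent to $\{k+1,\ldots,n-1\}$, the merged vertex gathers both sets to cover all remaining vertices.
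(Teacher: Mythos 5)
Your proof is correct: the identifications $K_n^{(k-1)}-e_k = K_n^{(k)}$, $K_n^{(k-1)}/e_k \cong K_{n-1}$, and $K_n^{(k-1)}-[e_k] \cong K_{n-2}$ all check out, the hypothesis that the extraction be nonempty holds for $n \ge 3$, and the telescoping sum with $\bn(K_{n-1} \sqcup \delta_1) = 0$ yields the factor $n-1$. The paper leaves this corollary to the reader, and your repeated application of the edge recursion of Theorem~\ref{thm:main result} is exactly the intended argument, so there is nothing to add.
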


Observe that the sequence $\{\bn(K_n)\}$ is sequence A000166 of \cite{oeis}, which is the number of derangements of $[n]$.  Reiner and Webb use character theory to obtain this same result, under the guise of the complex of injective words \cite{reiner-webb}.

From equation~\eqref{eqn:m + edge}, we see that the function $\bn$ is monotonically increasing with respect to edge addition.  More precisely, if $H \subseteq G$ is obtained by deleting some edges from the graph $G$, then $\bn(H) \le \bn(G)$.  One could ask when this inequality is strict, and when there is equality.

\begin{cor}
Fix a finite simple graph $G$.  Obtain $H \subseteq G$ by removing an edge of $G$.  Then $\bn(H) = \bn(G)$ if and only if $G$ has an isolated vertex, in which case $\bn(G) = \bn(H) = 0$.
\end{cor}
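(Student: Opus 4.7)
The plan is to exploit the edge-recursion
\[
  \bn(G) = \bn(G-e) + \bn(G/e) + \bn(G-[e])
\]
from Theorem~\ref{thm:main result} (with the convention $\bn(\emptyset) = 1$) in tandem with Corollary~\ref{cor:disjoint point}, which identifies graphs of boolean number zero as exactly those with an isolated vertex. For the ``if'' direction, I would simply observe that any isolated vertex of $G$ remains isolated in $H = G - e$, so Corollary~\ref{cor:disjoint point} forces $\bn(G) = \bn(H) = 0$.

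For the ``only if'' direction, assume $\bn(H) = \bn(G)$. First dispose of the boundary case $G - [e] = \emptyset$: here $G \cong A_2$, so $\bn(G) = 1$ while $\bn(H) = \bn(\delta_2) = 0$, contradicting the hypothesis. Thus $G - [e]$ is nonempty and the recursion applies, yielding $\bn(G/e) + \bn(G - [e]) = 0$. Since boolean numbers are non-negative (they enumerate spheres in a wedge, or equivalently follow inductively from the recursion and the non-negative initial values), both $\bn(G/e) = 0$ and $\bn(G - [e]) = 0$. Corollary~\ref{cor:disjoint point} then supplies an isolated vertex $v \in G - [e]$ and an isolated vertex $w \in G/e$.

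The crux of the argument is a small case analysis showing that these two hypotheses force $G$ itself to have an isolated vertex. Write $e = \{s,t\}$. Since $v$ is isolated in $G - [e]$, its only possible neighbors in $G$ lie in $\{s,t\}$. If $v$ has no neighbors at all in $G$, we are done. Otherwise, consider $w$: if $w$ is the contracted vertex of $G/e$, then $s$ and $t$ have no neighbors in $G$ outside of each other, which contradicts $v$ being adjacent to one of them. If instead $w$ is some vertex of $G$ distinct from $s, t$, then $w$ being isolated in $G/e$ means $w$ is non-adjacent to the contracted vertex (so non-adjacent to $s$ and $t$ in $G$) and non-adjacent to every other vertex of $G - [e]$; hence $w$ is isolated in $G$.

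The main obstacle is the final case analysis, but it is entirely elementary. The remainder of the proof consists of bookkeeping with the recursion and invoking Corollary~\ref{cor:disjoint point}, making this essentially a combinatorial consequence of the results already in hand.
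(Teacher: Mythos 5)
Your proof is correct and takes essentially the same route as the paper: use the edge recursion together with non-negativity of $\bn$ to conclude $\bn(G/e)=\bn(G-[e])=0$, invoke Corollary~\ref{cor:disjoint point} to get isolated vertices in those two graphs, and finish with the same elementary case analysis (depending on whether the isolated vertex of $G/e$ is the contracted vertex) to produce an isolated vertex of $G$. The only differences are cosmetic: you treat the $G\cong A_2$ boundary case explicitly and prove the easy direction by noting the isolated vertex persists in $G-e$, whereas the paper uses monotonicity $\bn(G-e)\le\bn(G)$.
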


\begin{proof}
Let $H = G-e$ for some edge $e$ in $G$.  If $G$ has an isolated vertex, then $\bn(G) = 0$, and so $\bn(H)$ must also be $0$ because $\bn(H) \le \bn(G)$.

Suppose now that $\bn(G) = \bn(H)$.  Then by Theorem~\ref{thm:main result}, it must be that $\bn(G/e)$ and $\bn(G-[e])$ are both $0$.  Therefore, each of these graphs contains an isolated vertex.  If $G/e$ contains an isolated vertex, then either $G$ has an isolated vertex, in which case we are done, or the edge $e$ comprises its own connected component in $G$.  In the latter case, then, requiring that $G-[e]$ have an isolated vertex forces $G$ to have an isolated vertex as well.
\end{proof} 

Given the recursive formula for calculating the boolean number, it is natural to compare $\bn$ to the 
\emph{universal edge elimination polynomial} $\xi(G,x,y,z)$, introduced in \cite{agm}. In \cite{CKRT} it is shown that these graph invariants are related by
 \begin{equation} \label{eq:xi} 
    \bn(G) = (-1)^{|G|}\xi(G,0,-1,1). 
 \end{equation}
The polynomial $\xi$ is defined on all graphs (simple or not) and has the universal property that any
graph polynomial satisfying an edge recursion involving deletion, extraction and contraction is an
evaluation of $\xi$. However, there is a subtlety involved in relating $\bn$ to $\xi$: $\bn$ is only
defined on \emph{simple} graphs, and its recursive formula involves deletion, extraction and
\emph{simple} contraction, in which loops and multiple edges that appear upon contracting an edge are
deleted. This subtlety can be overcome to obtain equation~\eqref{eq:xi}, and the details 
are carried out in \cite{CKRT}.

\begin{cor} \label{}
Let $G$ be a nonempty finite simple graph with vertex set $V$ and edge set $E$. Then
 \[ \bn(G) = \sum_{\substack{B \subseteq E \\ V(B) = V }} (-1)^{|G| + |B|-\kappa(B)}, \]
where $V(B)$ is the set of vertices that are incident to the edges in $B$, and $\kappa(B)$ is the number
of components in the graph with vertex set $V$ and edge set $B$. 

In particular, when $T$ is a nonempty finite tree, $\bn(T)$ is the number of spanning forests in $T$.
\end{cor}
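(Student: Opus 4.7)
The plan is to prove the formula by induction on the number of edges in $G$, using the recurrence $\bn(G) = \bn(G-e) + \bn(G/e) + \bn(G-[e])$ established in Theorem~\ref{thm:main result}. First, I would simplify the exponent: writing $n(B) := |B| - |G| + \kappa(B)$ for the nullity (cyclomatic number) of the spanning subgraph $(V, B)$, where $\kappa(B)$ denotes its number of components, one has
\[
|G| + |B| - \kappa(B) \;\equiv\; n(B) \pmod{2},
\]
so the claim reduces to showing
\[
\bn(G) \;=\; F(G) \;:=\; \sum_{\substack{B \subseteq E \\ V(B) = V}} (-1)^{n(B)}.
\]

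The base cases $G = \delta_n$ (where the only candidate $B = \emptyset$ qualifies only if $V = \emptyset$) and $G \cong A_2$ are handled by direct inspection. For the inductive step, fix an edge $e = \{u,w\}$ with $G-[e]$ nonempty, and I would show $F(G) = F(G-e) + F(G/e) + F(G-[e])$. Splitting the defining sum by whether $e \in B$, the subsets with $e \notin B$ contribute $F(G-e)$ on the nose. For $B = B' \cup \{e\}$, I would further partition by whether $B'$ has any edge incident to $u$ or $w$. If not, then $B' \subseteq E(G-[e])$ with $V(B') = V(G-[e])$, and a bookkeeping check yields $n_G(B' \cup \{e\}) = n_{G-[e]}(B')$, so this subcase contributes exactly $F(G-[e])$.

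The main work lies in the remaining subcase, where $B'$ has an edge incident to $u$ or $w$. I would consider the natural map $\phi \colon B' \mapsto B''$ from such $B'$ to subsets $B'' \subseteq E(G/e)$, where a pair of edges $\{u,x\}, \{w,x\}$ in $G$ both get sent to the single edge $\{v_e, x\}$ of $G/e$. The subtle feature introduced by simple contraction is that $\phi$ is many-to-one: for each vertex $x \in V \setminus \{u,w\}$ adjacent to both $u$ and $w$ in $G$ with $\{v_e, x\} \in B''$, the fiber $\phi^{-1}(B'')$ has three branches corresponding to $\{u,x\} \in B'$ only, $\{w,x\} \in B'$ only, or both in $B'$. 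A short computation comparing nullities then gives
\[
(-1)^{n_G(B' \cup \{e\})} \;=\; (-1)^{d(B')}\,(-1)^{n_{G/e}(B'')},
\]
where $d(B')$ counts such doubly-covered pairs, and summing over a fiber each doubly-adjacent vertex contributes a factor $1 + 1 - 1 = 1$ while every other vertex contributes a trivial factor $1$. Hence
\[
\sum_{B' \in \phi^{-1}(B'')} (-1)^{n_G(B' \cup \{e\})} \;=\; (-1)^{n_{G/e}(B'')},
\]
and summing over $B''$ with $V(B'') = V(G/e)$ identifies this subcase with $F(G/e)$.

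The final assertion about trees is then immediate: when $T$ is a nonempty finite tree, every $B \subseteq E(T)$ is a forest, so $n(B) = 0$ and $F(T)$ enumerates subsets $B \subseteq E(T)$ with $V(B) = V$, that is, spanning forests of $T$. The main obstacle will be the sign cancellation inside the $F(G/e)$ fiber, where one must verify that the three-to-one collapse coming from simple contraction interacts with the cyclomatic number in precisely the right way.
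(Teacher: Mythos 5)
Your proposal is correct, but it proves the identity by a genuinely different route than the paper. The paper obtains the first claim in one line by citing external results: the subset-expansion formula for the universal edge elimination polynomial $\xi$ of Averbouch--Godlin--Makowski, together with the relation $\bn(G) = (-1)^{|G|}\xi(G,0,-1,1)$ established in the Claesson--Kitaev--Ragnarsson--Tenner paper, where the delicate point that $\bn$ uses \emph{simple} contraction (discarding loops and redundant parallel edges) while $\xi$ uses multigraph contraction is resolved. You instead give a self-contained induction on the number of edges: you verify directly that the alternating sum $F(G)=\sum_{V(B)=V}(-1)^{n(B)}$ (with $n(B)$ the cyclomatic number, which indeed agrees mod $2$ with the paper's exponent) satisfies the same deletion/contraction/extraction recurrence and base cases as $\bn$, splitting on $e\in B$ and on whether $B\setminus\{e\}$ meets the endpoints of $e$. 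Your handling of the simple-contraction subtlety is exactly right and is the step the paper outsources: contracting $e$ inside $(V,B)$ preserves the cyclomatic number, each doubly-covered neighbor contributes one parallel pair so $n_G(B)=n_{G/e}(B'')+d(B')$, and the three preimages of each such edge of $G/e$ contribute $1+1-1=1$, collapsing each fiber to $(-1)^{n_{G/e}(B'')}$; the only graphs escaping the recurrence ($\delta_n$ and $A_2$) are precisely your base cases. What each approach buys: the paper's is shorter and places $\bn$ inside the universal $\xi$ framework, while yours is elementary, requires no outside references, and makes the loop/multi-edge bookkeeping explicit. Your argument for the tree statement coincides with the paper's (no cycles, so every sign is $+1$ and the sum counts spanning forests).
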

\begin{proof}
The first claim follows from the formula given for $\xi$ in \cite{agm} and equation~\eqref{eq:xi}. For the second claim, notice that for a tree with vertex set $V$ and edge set $E$, the condition $V(B) = V$ on a subset $B \subseteq E$ is equivalent to saying that $(V,B)$ is a spanning forest. Next observe 
that $\kappa(B) = |T| - |B|$, so 
 \[ (-1)^{|T| + |B| - \kappa(B)} = (-1)^{2|B|} = 1, \]
and the sum simply counts the number of spanning forests.
\end{proof}

\section{Homology of the boolean complex} \label{sec:Homology}

At this point we know that the boolean complex of a Coxeter system has the homotopy type of a wedge of spheres of maximal dimension. We also have an expedient way of calculating the number of spheres recursively from the Coxeter graph $G$. However we have little knowledge of the geometric or combinatorial relevance of the spheres in the wedge sum. Our proof of Theorem \ref{thm:main result} used discrete Morse theory to determine the homotopy type of the boolean complex by producing an acyclic matching on its face poset, the boolean ideal. This matching leaves one element of rank $0$ and $\bn(G)$ maximal elements unmatched. The unmatched element of rank $0$ represents a point $x_0 \in |\Delta(G)|$, and discrete Morse theory allows us to collapse all cells corresponding to matched cells down to this point without changing the homotopy type of $|\Delta(G)|$, thus obtaining the homotopy equivalence $|\Delta(G)| \simeq \bn(G) \cdot S^{|G|-1}$, where each unmatched maximal element corresponds to a sphere summand. However, this process involves several choices, and the unmatched cells are not at all canonical representatives for the spheres. Furthermore, the cell corresponding to an unmatched element is generally not the boundary of a sphere in $|\Delta(G)|$, and so does not represent the sphere in any geometric sense.

This discrepancy can by rectified by considering the homology of the boolean complex and we shall do so in this section. To avoid issues of orientation, we take coefficients in $\Z/2$. Determining the homology groups is a simple matter, since the homology of $\Delta(G)$ is the reduced homology of $|\Delta(G)|$ (reduced because of the rank $-1$ element of $\Delta(G)$).  Thus, by Theorem \ref{thm:main result},
\[ H_*(\Delta(G);\Z/2) \cong 
\begin{cases} 
(\Z/2)^{\bn(G)}, &\text{if $* = |G|-1$;}\\
0, &\text{if $* \neq |G|-1$.}
\end{cases}
\]
The important thing is therefore not calculating the homology, but finding the generating cycles, as a cycle representing a generator of $H_{|G|-1}(\Delta(G);\Z/2)$ is a canonical representative for the corresponding sphere summand. Indeed, such a cycle consists of a sum of cells, which (together with their boundaries) form a sphere in the geometric realization.

The authors have calculated explicit generating cycles for boolean complexes of Coxeter systems of type $A_n$, for $1 \leq n \leq 6$ and the results are presented below. The results also apply to Coxeter systems of type $B_n$ as they have the same unlabeled Coxeter graphs. We omit the calculations and only present the results.

To establish notation, we consider the Coxeter graph of $A_n$ to be a graph with vertices labeled $1$ through $n$ and an edge going from $k$ to $k+1$ for $1 \leq k <n$. The cycles generating the homology of $\Delta(A_n)$ are each a sum of cells of maximum dimension. We shall denote each cell by a string representing the corresponding maximal element in $\b(A_n)$, enclosed in square brackets.

\begin{example} The cycles generating $H_*(\Delta(A_n);\Z/2)$ for $1 \leq n \leq 6$ are as follows.\\
$H_*(\Delta(A_1);\Z_2) = 0$.\\
$H_1(\Delta(A_2);\Z_2) \cong \Z/2$ with generator $[12] + [21]$.\\
$H_2(\Delta(A_3);\Z_2) \cong \Z/2$ with generator 
\[y:= [123] + [213] + [312] + [321]. \]
$H_3(\Delta(A_4);\Z_2) \cong (\Z/2)^2$ with generators 
\begin{eqnarray*}
y_0 &:=& [1234] + [2134] + [4123] + [4213],\\
y_1 &:=& [3124] + [3214] + [4312] + [4321].
\end{eqnarray*}
$H_4(\Delta(A_5);\Z_2) \cong (\Z/2)^3$ with generators $y_{00}+y_{10}$, $y_{01}+y_{11}$, and $y_{00}+y_{01}$, where
\begin{eqnarray*}
y_{00} &:=& [12345] + [21345] + [51234] + [52134],\\
y_{01} &:=& [41235] + [42135] + [54123] + [54213],\\
y_{10} &:=& [31245] + [32145] + [53124] + [53214],\\
y_{11} &:=& [43125] + [43215] + [54312] + [54321].
\end{eqnarray*} 
$H_5(\Delta(A_6);\Z_2) \cong (\Z/2)^5$ with generators $y_{000}+y_{010}$, $y_{001}+y_{011}$, $y_{100}+y_{110}$, $y_{101}+ y_{111}$, and $y_{000} +y_{001} + y_{100}+y_{101}$, where
\begin{eqnarray*}
y_{000} &:=& [123456] + [213456] + [612345] + [621345],\\
y_{001} &:=& [512346] + [521346] + [651234] + [652134],\\ 
y_{010} &:=& [412356] + [421356] + [641235] + [642135],\\
y_{011} &:=& [541236] + [542136] + [654123] + [654213],\\
y_{100} &:=& [312456] + [321456] + [631245] + [632145],\\
y_{101} &:=& [531246] + [532146] + [653124] + [653214],\\
y_{110} &:=& [431256] + [432156] + [643125] + [643215],\\
y_{111} &:=& [543126] + [543216] + [654312] + [654321].
\end{eqnarray*}
\end{example}

There are clear patterns in these results, and we have tried to suggest this by the notation. The element $y_0 \in \b(A_4)$ is formed by taking the first two terms of the element $y \in \b(A_3)$ and adding the letter $4$ on either side, while $y_1$ is formed similarly from the last two terms of $y$. This rule continues, so for example $y_{101}$ is obtained by adding the letter $6$ on either side of the last two terms of $y_{10}$. From these calculations, it seems that the elements formed in this way play a key role in enumerating the homology classes of the boolean complexes. It would be very interesting to determine the exact rules dictating how cells group together to form homology classes, for type $A_n$ as well as other graphs.

It is also interesting to note that every cell of maximal dimension occurs in some homology class.

\section{Further questions}\label{section:follow-up}

Several questions about boolean complexes and boolean numbers are suggested by this work and remain unanswered.  These range from topological questions to more combinatorial ones.  We highlight a selection of these problems here.

\begin{question}
What are the cycles representing the homology of the boolean complex of a graph?  Even in the relatively simple case of type $A_n$, where one can enumerate the maximal cells explicitly, there is a rich combinatorial structure waiting to be uncovered. 
\end{question}

\begin{question}
Is there a relationship between the boolean complex of a Coxeter system and its Coxeter complex and/or Tits building? The boolean complex is clearly a ``coarser'' construction, as it depends only on the unlabeled Coxeter graph, while knowing the Coxeter complex or a Tits building is equivalent to knowing the labeled Coxeter graph, yet there are many similarities. For instance, the Coxeter complex of a finite reflection group has the homotopy type of a wedge of spheres.
\end{question}

The boolean number $\bn(G)$ of a graph $G$, is clearly a graph invariant. Letting $\G_n$ be the class of graphs with $n$ vertices, we have a function $\bn_n \colon \G_n \to \Z, G \mapsto \bn(G)$. This prompts the next two questions.

\begin{question}
What can be said of the image of $\bn_n$? For example, the corollaries in Section~\ref{section:corollaries} show that $1$ is in the image, and that the largest value is $\bn(K_n)$.  However, an easy example shows that the image is not equal to the interval $[1, \bn(K_n)]$: there is no graph $G$ on $4$ vertices with $\bn(G)$ equal to $4$.  Is there a way to decide which values are in the image, besides examining all graphs on $n$ vertices?  That is, is there another way to characterize the image of $\bn_n$?
\end{question}

\begin{question}
For graphs $G$ and $H$ on $n$ vertices, can anything special be said if $\bn(G) = \bn(H)$?  Note that there are non-isomorphic graphs with this property: let $G$ be a path of $5$ vertices, and let $H$ be a triangle with two extra vertices adjacent to one corner.  Then $\bn(G) = \bn(H) = 3$.
\end{question}

Recall from Remark~\ref{rem:m well defined} that the function $\bn$ is closely related to the rank generating function of the poset $\b(G)$.  Thus the previous two questions could be rephrased in terms of this function.

\begin{question}
Are there other families of graphs (besides the complete graphs) that give rise to an interesting sequence?  If so, is there a bijection between the homology generators for the boolean complexes of such a family and other interesting objects?
\end{question}

This last question is taken up in \cite{CKRT}, where it is shown where the boolean numbers of complete
bipartite graphs are expressed in terms of the Stirling numbers of the second kind, and
that the boolean numbers of Ferrers graphs of corresponding to staircase shapes are the Genocchi numbers of the second kind.

\section{Acknowledgments}

The authors are grateful to Patricia Hersh and Richard Ehrenborg for helpful discussions at the onset of this work, as well as to John Shareshian for thoughtful feedback and suggestions, and the comments of a dedicated referee.  We also thank Raleigh's in Berkeley, and Stephanie in particular, for providing the perfect setting for working on this project.  Finally, we appreciate the support of MSRI as well as their sponsorship of the programs during which much of this work was done.

\end{document}